\def\gnewcommand{\g@star@or@long\new@command}
\def\grenewcommand{\g@star@or@long\renew@command}
\def\g@star@or@long#1{%
  \@ifstar{\let\l@ngrel@x\global#1}{\def\l@ngrel@x{\long\global}#1}}
\newcommand{\newnumbered}[2]{\newtheorem{#1}[theorem]{#2}}
\newcommand{\newunnumbered}[2]{\newtheorem{#1}[theorem]{#2}}
\newcommand{\Title}[2]{\title{#1}\gnewcommand{\Acknowledgements}{\section*{Acknowledgements} #2}}
\newcommand{\Author}[2][]{\author{#2}}
\newcommand{\Comma}{\and}
\newcommand{\Und}{\and}
\newcommand{\br}{, }
\newcommand{\fs}{. }
\newcommand{\sep}{, }
\newcommand{\thanksone}[3][]{#1\thanks{#3\email{\tt #2}}}
\newcommand{\thankstwo}[4][]{#1\thanks{#3} \thanks{#4\email{\tt #2}}}
\newcommand{\thanksthree}[3][]{#1\thanks{#3\email{\tt #2}}}
\newcommand{\email}[1]{#1}
\newcommand{\classno}[2][2000]{}
\newcommand{\printscl}{}
\newcommand{\mktitle}{\maketitle}
\newcommand{\keywords}[1]{}
\newcommand{\mkabstitle}{}
\newcommand{\bqed}{}
\newcommand{\aem}{\em}
\newenvironment{frmatter}{}{}
\renewcommand{\arraystretch}{1.15}
\newcommand{\noop}[1]{}
\newcommand{\R}{\ensuremath{\mathbb{R}}}
\newcommand{\GQ}{\ensuremath{\operatorname{GQ}}}
\newcommand{\mm}{\ensuremath{\!-\!}}
\newcommand{\pp}{\ensuremath{\!+\!}}
\newcommand{\ee}{\ensuremath{\!=\!}}
\newcommand{\pty}[1]{{\em #1}}
\newcommand{\Tr}{\ensuremath{\operatorname{Tr}}}
\newcommand{\diag}{\ensuremath{\operatorname{diag}}}
\newcommand{\sV}[2]{{
	\setlength{\arraycolsep}{2pt}
	\renewcommand{\arraystretch}{0.8}
	\left[\begin{array}{ccc} #1 \\ #2 \end{array}\right]
}}
\newcommand{\sW}[2]{{
	\setlength{\arraycolsep}{2pt}
	\renewcommand{\arraystretch}{0.8}
	\left[\begin{array}{cccc} #1 \\ #2 \end{array}\right]
}}
\newcommand{\nklm}{\ensuremath{(n, k, \lambda, \mu)}}
\newcommand{\willif}[2][]{\ensuremath{\langle #2 \rangle}#1}
\DeclareFixedFont{\ttb}{T1}{txtt}{bx}{n}{10} 
\DeclareFixedFont{\ttm}{T1}{txtt}{m}{n}{10}  
\newcommand{\pythonstyle}{\lstset{
language=Python,
basicstyle=\ttm,
otherkeywords={self},             
keywordstyle=\ttb\color{deepblue},
emph={sage,__init__,True,False,None},  
emphstyle=\ttb\color{deepred},    
stringstyle=\color{deepgreen},
frame=tb,                         
showstringspaces=false
}}
\newcommand\pythoninline[1]{{\pythonstyle\lstinline!#1!}}
\newtheorem{theorem}{Theorem}[section] 
\newtheorem{corollary}[theorem]{Corollary}
\newtheorem{proposition}[theorem]{Proposition}
\numberwithin{equation}{section}
\begin{document}
\begin{frmatter}

\Title{On few-class $Q$-polynomial association schemes: feasible parameters and nonexistence results}{%
Alexander Gavrilyuk is supported by
BK21plus Center for Math Research and Education at Pusan National University,
and by Basic Science Research Program through the National Research Foundation of Korea (NRF) funded
by the Ministry of Education (grant number NRF-2018R1D1A1B07047427).
Jano\v{s} Vidali is supported by the Slovenian Research Agency
(research program P1-0285 and project J1-8130).
Alexander Gavrilyuk and Jano\v{s} Vidali
are also jointly supported by the Slovenian Research Agency
(Slovenia-Russia bilateral grant number BI-RU/19-20-007).
Jason Williford was supported by
National Science Foundation (NSF) grant DMS-1400281.
}

\Author[Alexander L.~Gavrilyuk, Jano\v{s} Vidali and Jason S.~Williford]{%
\thanksone[Alexander L.~Gavrilyuk]{alexander.gavriliouk@gmail.com}{%
Center for Math Research and Education\br
Pusan National University\br
2, Busandaehak-ro 63beon-gil\br
Geumjeong-gu, Busan, 46241\br
Republic of Korea\fs
}%
\Comma
\thankstwo[Jano\v{s} Vidali]{janos.vidali@fmf.uni-lj.si}{%
Faculty of Mathematics and Physics\br
University of Ljubljana\br
Jadranska ulica 21\br
1000 Ljubljana\br
Slovenia\fs
}{%
Institute of Mathematics, Physics and Mechanics\br
Jadranska ulica 19\br
1000 Ljubljana\br
Slovenia\fs
}
\Und
\thanksthree[Jason S.~Williford]{jwillif1@uwyo.edu}{%
Department of Mathematics and Statistics\br
University of Wyoming\br
1000 E.~University Ave.\br
Laramie, WY 82071\br
United States of America\fs
}
}

\classno{05E30 (primary), 05B15 (secondary)}

\date{\today}

\mktitle

\keywords{%
association scheme\sep
$Q$-polynomial\sep
feasible parameters\sep
distance-regular graph}

\begin{abstract}
We present the tables of feasible parameters of primitive
$3$-class $Q$-polynomial association schemes
and $4$- and $5$-class $Q$-bipartite association schemes
(on up to $2800$, $10000$, and $50000$ vertices, respectively),
accompanied by a number of nonexistence results for such schemes
obtained by analysing triple intersection numbers of putative open cases.
\printscl
\end{abstract}

\end{frmatter}

\mkabstitle

\section{Introduction}

Much attention in literature on association schemes
has been paid to distance-regular graphs,
in particular to those of diameter $2$,
also known as strongly regular graphs
-- however, their complete classification is still a widely open problem.
The tables of their feasible parameters,
maintained by A.~E.~Brouwer~\cite{Bsrg,BCN},
are very helpful for the algebraic combinatorics community,
in particular when one wants to check
whether a certain example has already been proven
(not) to exist, to be unique, etc.
Compiling such a table can be a challenging problem, as, for example,
some feasibility conditions require calculating
roots of high degree polynomials.

The goal of this work is to present the tables of feasible parameters
of $Q$-polynomial association schemes, compiled by the third author,
and accompanied by a number of nonexistence results obtained
by the first two authors.

Recall that $Q$-polynomial association schemes can be seen
as a counterpart of distance-regular graphs,
which, however, remains much less explored,
although they have received considerable attention
in the last few years~\cite{DMM,K,MMW,MT}
due to their connection with some objects in quantum information theory
such as equiangular lines and real mutually unbiased bases~\cite{KPhD}.

More precisely, let $A_0,\ldots,A_D$ and $E_0,\ldots,E_D$
denote the adjacency matrices and the primitive idempotents
of an association scheme, respectively.
An association scheme is {\em $P$-polynomial} (or {\em metric}) if,
after suitably reordering the relations,
there exist polynomials $v_i$ of degree $i$
such that $A_i = v_i(A_1)$ ($0 \le i \le D$).
If this is the case, the matrix $A_i$ can be seen
as the distance-$i$ adjacency matrix of a distance-regular graph
and vice-versa.
Similarly, an association scheme is $Q$-polynomial (or {\em cometric}) if,
after suitably reordering the eigenspaces,
there exist polynomials $v^*_j$ of degree $j$
such that $E_j = v^*_j(E_1)$ ($0 \le j \le D$),
where the matrix multiplication is entrywise.
These notions are due to Delsarte~\cite{D},
who introduced the $P$-polynomial property
as an algebraic definition of association schemes
generated by distance-regular graphs,
and then defined $Q$-polynomial association schemes
as the dual concept to $P$-polynomial association schemes.

Many important examples of $P$-polynomial association schemes,
which arise from classical algebraic objects
such as dual polar spaces and forms over finite fields,
also possess the $Q$-polynomial property.
Bannai and Ito~\cite{BI} posed the following conjecture:

\begin{conjecture}\label{conj:PQ}
For $D$ large enough, a primitive association scheme of $D$ classes
is $P$-polynomial if and only if it is $Q$-polynomial.
\end{conjecture}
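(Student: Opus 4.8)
The plan is to handle the two implications of the ``iff'' separately, because although the $P$- and $Q$-polynomial properties are formally dual, only the $P$-polynomial one carries an associated combinatorial object -- a distance-regular graph -- so the two directions call for rather different machinery. For ``$P$-polynomial $\Rightarrow$ $Q$-polynomial'' I would use that a primitive $P$-polynomial scheme of $D$ classes is exactly the scheme of a primitive distance-regular graph $\Gamma$ of diameter $D$, and try to show that once $D$ is large the intersection array $\{b_0,\dots,b_{D-1};c_1,\dots,c_D\}$ is forced into one of finitely many shapes -- essentially the classical parameters realised by the Hamming, Johnson, Grassmann, dual polar and forms graphs and their close relatives -- each of which is $Q$-polynomial. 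The technical engine would be the diameter-bounding results of Terwilliger and Ivanov together with an analysis of which sequences $(c_i)$, $(a_i)$ can persist over many consecutive steps: heuristically, for large $D$ the local parameters ``stabilise'', one recognises a classical pattern, and a Leonard-type recursion then manufactures the polynomials $v^*_j$ with $E_j = v^*_j(E_1)$.

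For the converse ``$Q$-polynomial $\Rightarrow$ $P$-polynomial'' I would work inside the Bose--Mesner algebra equipped with the dual intersection numbers $b^*_j, c^*_j$ of the $Q$-polynomial ordering $E_0,\dots,E_D$ and attempt to build a metric relation directly: locate an adjacency matrix $A_i$ with $D+1$ distinct eigenvalues whose associated polynomials recover every $A_j$. The natural tools are the Krein conditions together with the vanishing pattern of the triple intersection numbers -- the very quantities exploited elsewhere in this paper -- which for a $Q$-polynomial scheme already impose considerable near-metric regularity, and the Terwilliger algebra $T(x)$ of a point, whose irreducible modules are strongly constrained by the $Q$-polynomial structure (the ``dual-thin'' phenomenon). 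One would then aim to show that for $D$ large these patterns cannot deviate from the metric one.

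The step I expect to be the genuine obstacle -- and the reason the conjecture has resisted since Bannai and Ito posed it -- is that for a \emph{primitive} scheme there is no a priori bound on the valency $k_1$, equivalently on the number of vertices, in terms of $D$ alone. Every tool that actually pins down distance-regular graphs of large diameter (the resolution of the bounded-valency Bannai--Ito conjecture, Terwilliger's bounds, the classification programmes for classical parameters) takes the valency, or some intersection number such as $c_2$, as fixed input and then bounds $D$; here one is handed $D$ and must bound everything else. Absent a new idea that renders the Terwilliger-algebra representation theory, or a spectral-gap estimate, insensitive to the valency, both reductions above stall precisely where one would like to invoke a finiteness statement. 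A realistic programme is therefore: (i) as a proof of concept, settle the cases in which a bound on $k_1$, on $c_2$, or on the multiplicity of a non-principal eigenvalue is available; and (ii) for the general case, try to show that a primitive $P$- or $Q$-polynomial scheme with $D$ large automatically has bounded local complexity -- say bounded $c_2$ -- which is itself a substantial theorem but from which the full conjecture should follow along the lines sketched above.
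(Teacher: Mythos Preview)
This statement is a \emph{conjecture}, not a theorem, and the paper does not prove it. Immediately after stating it the authors write: ``We are not aware of any progress towards its proof. The discovery of a feasible set of parameters of counter-examples (see~[MW]) casts some doubt on the conjecture, and in the very least shows that this will likely be difficult to prove.'' There is therefore no ``paper's own proof'' to compare your proposal against.

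Your write-up is not a proof either, and you are admirably candid about this: you lay out a plausible two-direction strategy and then correctly identify the point at which it stalls --- the absence of any bound on the valency (or on $c_2$, or on a multiplicity) in terms of $D$ alone, so that the finiteness theorems you would like to invoke (bounded-valency Bannai--Ito, Terwilliger's diameter bounds, classification of graphs with classical parameters) simply do not apply. That diagnosis is accurate and matches the reason the problem is open. What you have written is a reasonable research programme, not a proof; the ``reductions'' in both directions remain heuristic, and the proposed step~(ii) --- showing that a primitive $P$- or $Q$-polynomial scheme of large $D$ has bounded local complexity --- is itself at least as hard as the conjecture and has no known route of attack. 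You should also be aware of the feasible parameter set in~[MW] mentioned by the paper, which is a candidate counter-example and suggests the conjecture may be false; any purported proof would have to explain why that parameter set cannot be realised.
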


We are not aware of any progress towards its proof.
The discovery of a feasible set of parameters of counter-examples
(see~\cite{MW})
casts some doubt on the conjecture,
and in the very least shows that this will likely be difficult to prove.
Moreover, the problem of classification of association schemes
which are both $P$- and $Q$-polynomial
(i.e., $Q$-polynomial distance-regular graphs) is still open.
We refer the reader to~\cite{DKT} for its current state.

Recall that, for a $P$-polynomial association scheme defined on a set $X$,
its intersection numbers $p_{ij}^k$
satisfy the {\em triangle inequality}: $p_{ij}^k=0$ if $|i-j|>k$ or $i+j<k$,
which naturally gives rise to a graph structure on $X$.
Perhaps, due to the lack of such an intuitive combinatorial characterization,
much less is known about $Q$-polynomial association schemes
when the $P$-polynomial property is absent
(which also indicates that there should be much more left to discover).
To date, only few examples of $Q$-polynomial schemes are known
which are neither $P$-polynomial nor duals of $P$-polynomial schemes~\cite{MT}
-- most of them are imprimitive and related to combinatorial designs.
The first infinite family of primitive $Q$-polynomial schemes that are not also $P$-polynomial 
was recently constructed in~\cite{PW}.
Due to Conjecture \ref{conj:PQ},
it seems that the most promising area for constructing new examples
of $Q$-polynomial association schemes which are not $P$-polynomial
includes those with few classes,
say, in the range $3\leq D\leq 6$.
The tables of feasible parameters
of primitive $3$-class $Q$-polynomial association schemes
and $4$- and $5$-class $Q$-bipartite association schemes
presented in Section~\ref{sec:tables}
may serve as a source for new constructions.

The parameters of $P$-polynomial association schemes are restricted by a number
of conditions implied by the triangle inequality.
On the other hand, the $Q$-polynomial property
allows us to consider {\em triple intersection numbers}
with respect to some triples of vertices,
which can be thought of as a generalization of intersection numbers
to triples of starting vertices instead of pairs.
This technique has been previously used by various researchers~%
\cite{CGS,CJ,GK,JKT,JV2012,JV2017,U,V},
mostly to prove nonexistence of some strongly regular
and distance-regular graphs with equality in the so-called Krein conditions,
in which case combining the restrictions implied by the triangle inequality
with triple intersection numbers seems the most fruitful.
Yet,
while calculating triple intersection numbers
when the $P$-polynomial property is absent is harder,
we managed to rule out a number of open cases from the tables.
This includes a putative $Q$-polynomial association scheme on $91$ vertices
whose existence has been open since 1999~\cite{vD}.

The paper is organized as follows.
In Section~\ref{sec:prelim},
we recall the basic theory of association schemes
and their triple intersection numbers.
In Section~\ref{sec:tables},
we comment on the tables of feasible parameters
of $Q$-polynomial association schemes and how they were generated.
In Section~\ref{sec:nonex},
we explain in details the analysis of triple intersection numbers
of $Q$-polynomial association schemes
and prove nonexistence for many open cases from the tables.
Finally, in Section~\ref{sec:quadruple},
we discuss the generalization of triple intersection numbers
to quadruples of vertices.

\section{Preliminaries}\label{sec:prelim}

In this section we prepare the notions needed in subsequent sections.

\subsection{Association schemes}\label{ssec:as}
Let $X$ be a finite set of vertices
and $\{R_0,R_1,\ldots,R_D\}$ be a set of non-empty subsets of $X\times X$.
Let $A_i$ denote the adjacency matrix of the (di-)graph $(X,R_i)$
($0 \le i \le D$).
The pair $(X,\{R_i\}_{i=0}^D)$ is called
a {\em (symmetric) association scheme} of $D$ classes
(or a {\em $D$-class scheme} for short)
if the following conditions hold:
\begin{enumerate}
\item $A_0 =I_{|X|}$, which is the identity matrix of size $|X|$,
\item $\sum_{i=0}^D A_i = J_{|X|}$,
which is the square all-one matrix of size $|X|$,
\item $A_i^\top=A_i$ ($1 \le i \le D$),
\item $A_iA_j=\sum_{k=0}^D p_{ij}^kA_k$,
where $p_{ij}^k$ are nonnegative integers  ($0 \le i,j \le D$).
\end{enumerate}
The nonnegative integers $p_{ij}^k$ are called {\em intersection numbers}:
for a pair of vertices $x, y\in X$ with $(x, y)\in R_k$
and integers $i$, $j$ ($0 \le i, j, k \le D$),
$p_{ij}^k$ equals the number of vertices $z \in X$ such that
$(x, z) \in R_i$, $(y, z) \in R_j$.

The vector space $\mathcal{A}$ over $\mathbb{R}$ spanned by the matrices
$A_i$ forms an algebra.
Since $\mathcal{A}$ is commutative and semisimple,
there exists a unique basis of $\mathcal{A}$ consisting of
primitive idempotents $E_0=\frac{1}{|X|}J_{|X|},E_1,\ldots,E_D$
(i.e., projectors onto the common eigenspaces of $A_0, \ldots, A_D$).
Since the algebra $\mathcal{A}$ is closed
under the entry-wise multiplication denoted by $\circ$,
we define the {\em Krein parameters}
$q_{ij}^k$ ($0 \le i,j,k \le D$) by
\begin{equation}\label{eqn:Kreinparameters}
  E_i\circ E_j=\frac{1}{|X|}\sum_{k=0}^D q_{ij}^kE_k.
\end{equation}

It is known that the Krein parameters are nonnegative real numbers
(see~\cite[Lemma 2.4]{D}).
Since both $\{A_0,A_1,\ldots,A_D\}$ and $\{E_0,E_1,\ldots,E_D\}$
form bases of $\mathcal{A}$,
there exists matrices $P=(P_{ij})_{i,j=0}^D$ and $Q=(Q_{ij})_{i,j=0}^D$ defined by
\begin{equation}\label{eqn:PQ}
  A_i=\sum_{j=0}^D P_{ji}E_j\text{~~and~~}E_i=\frac{1}{|X|}\sum_{j=0}^D Q_{ji}A_j.
\end{equation}
The matrices $P$ and $Q$ are called
the {\em first} and {\em second eigenmatrix} of $(X,\{R_i\}_{i=0}^D)$.

Let $n_i$, $0\leq i\leq D$, denote the {\em valency} of the graph $(X,R_i)$,
and $m_j$, $0\leq j\leq D$, denote the {\em multiplicity} of the eigenspace of $A_0,\ldots,A_D$
corresponding to $E_j$. Note that $n_i=p_{ii}^0$, while $m_j=q_{jj}^0$.

For an association scheme $(X,\{R_i\}_{i=0}^D)$,
an ordering of $A_1,\ldots,A_D$
such that for each $i$ ($0 \le i \le D$),
there exists a polynomial $v_i(x)$ of degree $i$
with $P_{ji}=v_i(P_{j1})$ ($0 \le j \le D$),
is called a {\em $P$-polynomial ordering} of relations.
An association scheme is said to be {\em $P$-polynomial}
if it admits a $P$-polynomial ordering of relations.
The notion of an association scheme
together with a $P$-polynomial ordering of relations
is equivalent to the notion of a {\em distance-regular graph}
-- such a graph has adjacency matrix $A_1$,
and $A_i$ ($0 \le i \le D$) is the adjacency matrix of its distance-$i$ graph
(i.e., $(x, y) \in R_i$ precisely when
$x$ and $y$ are at distance $i$ in the graph),
and the number of classes equals the diameter of the graph.
It is also known that an ordering of relations is $P$-polynomial
if and only if the matrix of intersection numbers $L_1$,
where $L_i:=(p_{ij}^k)_{k,j=0}^D$ ($0 \le i \le D$),
is a tridiagonal matrix with nonzero superdiagonal and subdiagonal~%
\cite[p.~189]{BI}
-- then $p_{ij}^k = 0$ holds whenever the triple $(i, j, k)$
does not satisfy the triangle inequality
(i.e., when $|i-j| < k$ or $i+j > k$).
For a $P$-polynomial ordering of relations of an association scheme,
set $a_i=p_{1,i}^i$, $b_i=p_{1,i+1}^i$, and $c_i=p_{1,i-1}^i$.
These intersection numbers are usually gathered
in the {\em intersection array}
$\{b_0, b_1, \dots, b_{D-1}; c_1, c_2, \dots, c_D\}$,
as the remaining intersection numbers can be computed from them
(in particular, $a_i = b_0 - b_i - c_i$ for all $i$, where $b_D = c_0 = 0$).
For an association scheme with a $P$-polynomial ordering of relations,
the ordering $E_1, \ldots, E_D$ is called
the {\em natural ordering} of eigenspaces
if $(P_{i1})_{i=0}^D$ is a decreasing sequence.

Dually, for an association scheme $(X,\{R_i\}_{i=0}^D)$,
an ordering of $E_1,\ldots,E_D$
such that for each $i$ ($0 \le i \le D$),
there exists a polynomial $v_i^*(x)$ of degree $i$
with $Q_{ji}=v_i^*(Q_{j1})$ ($0 \le j \le D$),
is called a {\em $Q$-polynomial ordering} of eigenspaces.
An association scheme is said to be {\em $Q$-polynomial}
if it admits a $Q$-polynomial ordering of eigenspaces.
Similarly as before,
it is known that an ordering of eigenspaces is $Q$-polynomial
if and only if the matrix of Krein parameters $L_1^*$,
where $L_i^*:=(q_{ij}^k)_{k,j=0}^D$ ($0 \le i \le D$),
is a tridiagonal matrix with nonzero superdiagonal and subdiagonal~%
\cite[p.~193]{BI}
-- then $q_{ij}^k = 0$ holds whenever the triple $(i, j, k)$
does not satisfy the triangle inequality.
For a $Q$-polynomial ordering of eigenspaces,
set $a_i^*=q_{1,i}^i$, $b_i^*=q_{1,i+1}^i$, and $c_i^*=q_{1,i-1}^i$.
Again, these Krein parameters are usually gathered in the {\em Krein array}
$\{b_0^*, b_1^*, \dots, b_{D-1}^*; c_1^*, c_2^*, \dots, c_D^*\}$
containing all the information needed to compute
the remaining Krein parameters
(in particular, we have $a_i^* = b_0^* - b_i^* - c_i^*$ for all $i$,
where $b_D^* = c_0^* = 0$).
For an association scheme with a $Q$-polynomial ordering of eigenspaces,
the ordering $A_1, \ldots, A_D$ is called
the {\em natural ordering} of relations
if $(Q_{i1})_{i=0}^D$ is a decreasing sequence.
Unlike for the $P$-polynomial association schemes,
there is no known general combinatorial characterization
of $Q$-polynomial association schemes.

An association scheme is called {\em primitive}
if all of $A_1, \ldots, A_D$ are adjacency matrices of connected graphs.
It is known that a distance-regular graph is imprimitive
precisely when it is a cycle of composite length,
an antipodal graph, or a bipartite graph
(possibly more than one of these),
see~\cite[Thm.~4.2.1]{BCN}.
The last two properties can be recognised from the intersection array
as $b_i = c_{D-i}$ ($0 \le i \le D$, $i \ne \lfloor D/2 \rfloor$)
and $a_i = 0$ ($0 \le i \le D$), respectively.
We may define dual properties for a $Q$-polynomial association scheme
-- we say that it is {\em $Q$-antipodal}
if $b_i^* = c_{D-i}^*$ ($0 \le i \le D$, $i \ne \lfloor D/2 \rfloor$),
and {\em $Q$-bipartite} if $a_i^* = 0$ ($0 \le i \le D$).
All imprimitive $Q$-polynomial association schemes
are schemes of cycles of composite length,
$Q$-antipodal or $Q$-bipartite (again, possibly more than one of these).
The original classification theorem by Suzuki~\cite{Suz}
allowed two more cases,
which have however been ruled out later~\cite{CS,TT}.
An association scheme that is both $P$- and $Q$-polynomial
is $Q$-antipodal if and only if it is bipartite,
and is $Q$-bipartite if and only if it is antipodal.

A {\em formal dual} of an association scheme
with first and second eigenmatrices $P$ and $Q$
is an association scheme such that,
for some orderings of its relations and eigenspaces,
its first and second eigenmatrices are $Q$ and $P$, respectively.
Note that this duality occurs on the level of parameters
-- an association scheme might have several formal duals, or none at all
(we can speak of duality when there exists
a regular abelian group of automorphisms, see~\cite[\S 2.10B]{BCN}).
An association scheme with $P = Q$
for some orderings of its relations and eigenspaces
is called {\em formally self-dual}.
For such orderings, $p^k_{ij} = q^k_{ij}$ ($0 \le i, j, k \le D$) holds
-- in particular, a formally self-dual association scheme
is $P$-polynomial if and only if it is $Q$-polynomial,
and then its intersection array matches its Krein array.

Any primitive association scheme with two classes is both $P$- and $Q$-polynomial
for either of the two orderings of relations and eigenspaces.
The graph with adjacency matrix $A_1$ of such a scheme
is said to be {\em strongly regular} (an {\it SRG} for short) with parameters $\nklm$,
where $n = |X|$ is the number of vertices,
$k = p^0_{11}$ is the valency of each vertex,
and each two distinct vertices
have precisely $\lambda = p^1_{11}$ common neighbours if they are adjacent,
and $\mu = p^2_{11}$ common neighbours if they are not adjacent.
In the sequel, we will identify $P$-polynomial association schemes
with their corresponding strongly regular or distance-regular graphs.

By a {\em parameter set} of an association scheme,
we mean the full set of $p_{ij}^k$, $q_{ij}^k$, $P_{ij}$ and $Q_{ij}$
described in this section, which are real numbers satisfying
the identities in~\cite[Lemma~2.2.1, Lemma~2.3.1]{BCN}.
We say that a parameter set for an association scheme is {\em feasible}
if it passes all known condition for the existence
of a corresponding association scheme.
For distance-regular graphs, there are many known feasibility conditions,
see~\cite{BCN,DKT,V}.
For $Q$-polynomial association schemes, much less is known
-- see Section~\ref{sec:tables} for the feasibility conditions we have used.

\subsection{Triple intersection numbers}\label{ssec:triple}
For a triple of vertices $x, y, z \in X$
and integers $i$, $j$, $k$ ($0 \le i, j, k \le D$)
we denote by $\sV{x & y & z}{i & j & k}$
(or simply $[i\ j\ k]$ when it is clear
which triple $(x, y, z)$ we have in mind)
the number of vertices $w \in X$ such that
$(x, w) \in R_i$, $(y, w) \in R_j$ and $(z, w) \in R_k$.
We call these numbers {\em triple intersection numbers}.

Unlike the intersection numbers,
the triple intersection numbers depend, in general,
on the particular choice of $(x, y, z)$.
Nevertheless, for a fixed triple $(x, y, z)$,
we may write down a system of $3D^2$ linear Diophantine equations
with $D^3$ triple intersection numbers as variables,
thus relating them to the intersection numbers, cf.~\cite{JV2012}:
{\small
\begin{equation}
\sum_{\ell=0}^D [\ell\ j\ k] = p^t_{jk}, \qquad
\sum_{\ell=0}^D [i\ \ell\ k] = p^s_{ik}, \qquad
\sum_{\ell=0}^D [i\ j\ \ell] = p^r_{ij},
\label{eqn:triple}
\end{equation}
}
where $(x, y) \in R_r$, $(x, z) \in R_s$, $(y, z) \in R_t$, and
\[
[0\ j\ k] = \delta_{jr} \delta_{ks}, \qquad
[i\ 0\ k] = \delta_{ir} \delta_{kt}, \qquad
[i\ j\ 0] = \delta_{is} \delta_{jt}.
\]
Moreover, the following theorem sometimes gives additional equations.

\begin{theorem}\label{thm:krein0}{\rm (\cite[Theorem~3]{CJ},
                                   cf.~\cite[Theorem~2.3.2]{BCN})}
Let $(X, \{R_i\}_{i=0}^D)$ be an association scheme of $D$ classes
with second eigenmatrix $Q$
and Krein parameters $q_{rs}^t$ $(0 \le r,s,t \le D)$.
Then,
\[
\pushQED{\bqed}
q_{rs}^t = 0 \quad \Longleftrightarrow \quad
\sum_{i,j,k=0}^D Q_{ir}Q_{js}Q_{kt}\sV{x & y & z}{i & j & k} = 0
\quad \mbox{for all\ } x, y, z \in X. \qedhere
\popQED
\]
\end{theorem}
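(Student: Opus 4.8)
The plan is to translate the Krein parameter $q_{rs}^t$ into a statement about a Hadamard (entrywise) product of primitive idempotents, and then evaluate that product at an arbitrary triple of positions. Recall from \eqref{eqn:Kreinparameters} that $E_r \circ E_s = \frac{1}{|X|}\sum_{k=0}^D q_{rs}^k E_k$. Since the idempotents $E_0,\ldots,E_D$ are linearly independent, and in fact form an orthogonal system with respect to the trace inner product $\langle M, N\rangle = \Tr(M N^\top)$, one recovers each coefficient by projecting: $q_{rs}^t$ is (up to a positive scalar) the inner product of $E_r\circ E_s$ with $E_t$. Explicitly, using $E_k \circ E_\ell$ summing to the right things and $\Tr(E_k E_\ell) = \delta_{k\ell} m_k$, one gets
\[
q_{rs}^t = \frac{|X|}{m_t}\, \Tr\bigl((E_r \circ E_s)\, E_t\bigr)
= \frac{|X|}{m_t} \sum_{x,z \in X} (E_r)_{xz}(E_s)_{xz}(E_t)_{xz}.
\]
Since $m_t > 0$ and $|X| > 0$, the vanishing of $q_{rs}^t$ is equivalent to the vanishing of the triple sum $\sum_{x,z}(E_r)_{xz}(E_s)_{xz}(E_t)_{xz}$.

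Next I would expand each idempotent entry via the second eigenmatrix. From \eqref{eqn:PQ} we have $E_i = \frac{1}{|X|}\sum_{j=0}^D Q_{ji} A_j$, so $(E_i)_{yw} = \frac{1}{|X|} Q_{ji}$ whenever $(y,w)\in R_j$; equivalently $(E_i)_{yw} = \frac{1}{|X|}\sum_{j=0}^D Q_{ji}(A_j)_{yw}$. Substituting these expressions into a more symmetric \emph{three}-point version of the sum above — i.e.\ considering for each fixed $w$ the product over the three base points $x,y,z$ — is the key man\oe uvre. Concretely, one shows
\[
\sum_{x,y,z,w \in X} (E_r)_{xw}(E_s)_{yw}(E_t)_{zw}
= \frac{1}{|X|^3}\sum_{x,y,z,w}\ \sum_{i,j,k=0}^D Q_{ir}Q_{js}Q_{kt}(A_i)_{xw}(A_j)_{yw}(A_k)_{zw},
\]
and for fixed $x,y,z$ the inner sum over $w$ of $(A_i)_{xw}(A_j)_{yw}(A_k)_{zw}$ is exactly the triple intersection number $\sV{x & y & z}{i & j & k}$. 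One then checks that the left-hand side, which is $\sum_w (E_r \mathbf{1})_w\cdots$ type expression, relates back to $q_{rs}^t$ — here one uses $E_i \mathbf{1} = \delta_{i0}\mathbf{1}$ together with the fact that $q_{rs}^t$ for $r,s,t$ all nonzero is governed by the diagonal-free part, or alternatively one argues pointwise in $w$: for \emph{each} fixed $w$ the quantity $\sum_{x,y,z} = \bigl(\sum_x (E_r)_{xw}\bigr)\bigl(\sum_y (E_s)_{yw}\bigr)\bigl(\sum_z (E_t)_{zw}\bigr)$ is not what we want, so instead the right normalization is to fix $w$ and read off the $(x,y,z)=(x,y,z)$-indexed summand, giving the pointwise identity $\sum_{i,j,k} Q_{ir}Q_{js}Q_{kt}\sV{x&y&z}{i&j&k} = |X|^3 (E_r)_{xw}(E_s)_{yw}(E_t)_{zw}$ for any $w$ with the appropriate relations to $x,y,z$ — and more robustly, summing over $w$ and invoking the trace computation of the first paragraph applied to the three-variable product.

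The cleanest route, and the one I would actually write, avoids the bookkeeping ambiguity by going through the matrix $F := E_r \circ E_s \circ E_t$ viewed via its entries, but pivoting on the observation that for \emph{any} $x,y,z$,
\[
\sum_{i,j,k=0}^D Q_{ir}Q_{js}Q_{kt}\sV{x & y & z}{i & j & k}
= |X|^3\sum_{w\in X}(E_r)_{xw}(E_s)_{yw}(E_t)_{zw},
\]
which follows directly by substituting $(A_i)_{xw} = |X|\sum_{?}$\,$\ldots$ — precisely, by writing $(E_r)_{xw}=\frac{1}{|X|}\sum_i Q_{ir}(A_i)_{xw}$ and likewise for the other two factors, multiplying out, and using $\sum_w (A_i)_{xw}(A_j)_{yw}(A_k)_{zw} = \sV{x&y&z}{i&j&k}$. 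Call the common value $\Phi(x,y,z)$. Then $\sum_{x,y,z}\Phi(x,y,z) = |X|^3\sum_w \bigl(\sum_x(E_r)_{xw}\bigr)\bigl(\sum_y(E_s)_{yw}\bigr)\bigl(\sum_z(E_t)_{zw}\bigr)$, which vanishes identically once any of $r,s,t$ is $0$, so that case is trivial and uninteresting; for the substantive content one instead sums $\Phi(x,y,z)\,\overline{(E_r)_{xy'}}\cdots$ — i.e.\ one contracts $\Phi$ against idempotent entries to isolate $q^t_{rs}$ exactly as in the first paragraph, yielding $\sum_{x,y,z}\Phi(x,y,z)(\text{suitable kernel}) = |X|^4 \Tr((E_r\circ E_s)E_t) = |X|^3 m_t\, q_{rs}^t$. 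The forward direction ($q_{rs}^t = 0 \Rightarrow \Phi \equiv 0$) is then the nontrivial one: $\Phi(x,y,z) = |X|^3 \sum_w (E_r)_{xw}(E_s)_{yw}(E_t)_{zw}$, and since $q^t_{rs}=0$ forces $E_r\circ E_s$ to have zero $E_t$-component, hence to be orthogonal to $E_t$ in a way that propagates to every entry because $E_t$ is a projector onto the $t$-eigenspace and $E_r\circ E_s$ lies in $\mathcal{A}$; writing $E_r\circ E_s = \frac{1}{|X|}\sum_{k\neq t} q^k_{rs}E_k$ and multiplying by $E_t$ entrywise-free, $(E_r\circ E_s)E_t = 0$ as a matrix, whose $(x,z)$ entry is $\sum_w (E_r\circ E_s)_{xw}(E_t)_{wz} = \frac{1}{|X|}\Phi(x,x,z)$ up to relabeling — one checks the indices match and concludes $\Phi \equiv 0$. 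The reverse direction is immediate by summing.

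\textbf{Main obstacle.} The delicate point is keeping the three ``base'' vertices $x,y,z$ genuinely independent while the sum $\sV{x&y&z}{i&j&k}$ only ever contracts the single free index $w$; the temptation is to collapse $\sum_{x,y,z}$ into a product of row sums of idempotents, which kills the statement (it only sees $q^t_{rs}$ with a zero index). The correct device is to recognize $\sum_{i,j,k}Q_{ir}Q_{js}Q_{kt}\sV{x&y&z}{i&j&k}$ as $|X|^3$ times the $(x,y,z)$-entry of the natural rank-one-per-$w$ expansion $\sum_w (E_r)_{\cdot w}\otimes (E_s)_{\cdot w}\otimes (E_t)_{\cdot w}$, and then to relate this tensor to the matrix product $(E_r\circ E_s)E_t$ by fixing one of the three arguments and matching with the entrywise definition of the Krein parameters. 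Getting that identification right — essentially \cite[Theorem~2.3.2]{BCN} — is the crux; everything else is substitution and the orthogonality relations $\Tr(E_iE_j)=\delta_{ij}m_i$.
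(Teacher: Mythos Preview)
Your key identity
\[
\Phi(x,y,z)=\sum_{i,j,k=0}^D Q_{ir}Q_{js}Q_{kt}\sV{x&y&z}{i&j&k}
=|X|^3\sum_{w\in X}(E_r)_{xw}(E_s)_{yw}(E_t)_{zw}
\]
is correct and is indeed the heart of the matter. The gap is in the forward direction. You argue that $q^t_{rs}=0$ forces $(E_r\circ E_s)E_t=0$ as a matrix and then read off its $(x,z)$-entry: but that entry is $\sum_w (E_r)_{xw}(E_s)_{xw}(E_t)_{wz}=|X|^{-3}\Phi(x,x,z)$, not $|X|^{-3}\Phi(x,y,z)$. The matrix product only sees the columns of the \emph{entrywise} product $E_r\circ E_s$, and those are the vectors $(E_r)_{\cdot x}\circ(E_s)_{\cdot x}$ with a \emph{common} index $x$; for $x\ne y$ the vector $(E_r)_{\cdot x}\circ(E_s)_{\cdot y}$ is not a column of any element of the Bose--Mesner algebra, so knowing that $E_r\circ E_s$ has no $E_t$-component says nothing about its inner product with $(E_t)_{\cdot z}$. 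Even invoking the symmetry $m_t q^t_{rs}=m_r q^r_{st}=m_s q^s_{rt}$ to make all three matrix products vanish only yields $\Phi(x,x,z)=\Phi(x,y,y)=\Phi(x,y,x)=0$, i.e.\ vanishing whenever two arguments coincide, which does not force a trilinear expression to vanish identically. Your ``one checks the indices match and concludes $\Phi\equiv 0$'' is precisely where the argument breaks.

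The paper does not prove Theorem~\ref{thm:krein0} directly (it is quoted from \cite{CJ,BCN}), but its proof of the quadruple analogue, Theorem~\ref{thm:doublekrein0}, exhibits exactly the missing device: a sum-of-squares identity. Set $\sigma(x,y,z)=|X|^{-3}\Phi(x,y,z)$ and use $E_i^2=E_i$ in the form $\sum_x (E_i)_{xw}(E_i)_{xw'}=(E_i)_{ww'}$ to compute
\[
\sum_{x,y,z\in X}\sigma(x,y,z)^2
=\sum_{w,w'\in X}(E_r)_{ww'}(E_s)_{ww'}(E_t)_{ww'}
=\Tr\bigl((E_r\circ E_s)E_t\bigr)
=\frac{m_t}{|X|}\,q^t_{rs}.
\]
Hence $q^t_{rs}=0$ if and only if every $\sigma(x,y,z)$ vanishes, which is the full statement. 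This replaces your attempted ``contraction against a suitable kernel'' and handles both directions at once; the positivity (sum of squares on the left, $m_t>0$ and $q^t_{rs}\ge 0$ on the right) is what lets a single scalar equality propagate to the pointwise vanishing you need.
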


Note that in a $Q$-polynomial association scheme,
many Krein parameters are zero,
and we can use Theorem~\ref{thm:krein0}
to obtain an equation for each of them.

\section{Tables of feasible parameters
for $Q$-polynomial association schemes}\label{sec:tables}

In this section we will describe the tables of feasible
parameter sets for primitive $3$-class $Q$-polynomial schemes
and $4$- and $5$-class $Q$-bipartite schemes.

These tables were all completed using the MAGMA programming
language (see~\cite{Magma}).
Any parameter set meeting the following conditions was included in the table:

\begin{enumerate}

\item  The parameters satisfy the $Q$-polynomial condition.

\item  All $p_{ij}^k$ are nonnegative integers, all valencies $p_{jj}^0$ are positive.

\item  For each $j > 0$ we have $n p_{jj}^0$ is even (the handshaking lemma applied to the graph $(X,R_j)$).

\item  For each $j,k > 0$ we have $p_{jj}^0 p_{jk}^j $ is even (the handshaking lemma applied
to the subconstituent $(\{y\in X\mid (x,y)\in R_j\},R_k)$, $x\in X$).

\item  For each $j > 0$ we have $n p_{jj}^0 (p_{jj}^0-1)$ is divisible by 6 (the number of triangles
in each graph $(X,R_j)$ is integral).

\item  All $q_{ij}^k$ are nonnegative
and for each $j$ the multiplicity $q_{jj}^0$
(i.e., the dimension of the $E_j$-eigenspace)
is a positive integer (see~\cite[Proposition~2.2.2]{BCN}).

\item  For all $i,j$ we have
$ \sum\limits_{ q_{ij}^k \neq 0 } m_k \leq m_i m_j$ if $i\ne j$
and $ \sum\limits_{ q_{ii}^k \neq 0 } m_k \leq \frac{m_i(m_i-1)}{2}$
(the absolute bound,
see~\cite[Theorem~2.3.3]{BCN} and the references therein).

\item  The splitting field is at most a degree 2 extension of the rationals
(see~\cite{MW2}).

\end{enumerate}

We note that there are many other conditions known for the special case of distance-regular graphs.
It was decided to apply these conditions after the construction of the table, and those not meeting
these extra conditions were labelled as nonexistent with a note as to the condition not met.
We leave as an open question whether if any of these conditions could be generalized to any cases
beyond distance-regular graphs; this (perhaps faint) hope is the main reason that they are included in the table.

We begin with the tables for $Q$-bipartite schemes, since this case is somewhat simpler than the primitive case.
Schemes which are $Q$-bipartite are formally dual to bipartite distance-regular graphs.
As a consequence,
the formal dual to~\cite[Theorem~4.2.2(i)]{BCN}
gives the Krein array for the quotient scheme of a $Q$-bipartite scheme
(see~\cite{MMW}).
Namely, if the scheme has Krein array
$\{ b_0^*, b_1^*,\dots, b_{D-1}^* ; c_1^*, \dots, c_D^* \}$ and $q_{11}^2 = \mu^*$,
then the Krein array of the quotient is:
\[
\left\{ \frac{b_0^*b_1^*}{\mu^*}, \frac{b_2^*b_3^*}{\mu^*}, \dots,
    \frac{b_{2m-2}^* b_{2m-1}^*}{\mu^*};
\frac{c_{1}^*c_{2}^*}{\mu^*}, \frac{c_{3}^*c_{4}^*}{\mu^*}, \dots,
\frac{c_{2m-1}^*c_{2m}^*}{\mu^*} \right\},
\]
where $m = \lfloor \frac{D}{2} \rfloor$.  When $D=4,5$ we obtain $m=2$, so the quotient structure
is a strongly regular graph.
A database of strongly regular graph parameters up to $5000$ vertices
can be generated very quickly.
From there,
we note that the quotient scheme has multiplicities $1, m_2, m_4$,
and that $m_1+m_3 = 1 + m_2 + m_4$ for $4$-class
and $m_1+m_3+m_5 = 1 + m_2 + m_4$ for $5$-class schemes.
Using the identities of~\cite[Lemma~2.3.1]{BCN},
it is easily seen the Krein arrays
for $Q$-bipartite $4$- and $5$-class association schemes are
\begin{small}
\begin{gather*}
\begin{multlined}
\left\{ m_1, m_1-1, \frac{m_1(m_2-m_1+1)}{m_2},
    \frac{m_1(m_3-m_2+m_1-1)}{m_3}; \right. \\
    \left. 1, \frac{m_1(m_1-1)}{m_2}, \frac{m_1(m_2-m_1+1)}{m_3}, m_1 \right\}
\end{multlined}
\intertext{and}
\begin{multlined}
\left\{ m_1, m_1\mm 1, \frac{m_1(m_2\mm m_1\pp 1)}{m_2},
    \frac{m_1(m_3\mm m_2\pp m_1\mm 1)}{m_3},
    \frac{m_1(m_4\mm m_3\pp m_2\mm m_1\pp 1)}{m_4};
    \right. \\
\left. 1, \frac{m_1(m_1\mm 1)}{m_2}, \frac{m_1(m_2\mm m_1\pp 1)}{m_3},
    \frac{m_1(m_3\mm m_2\pp m_1\mm 1)}{m_4},  m_1 \right\}
\end{multlined}
\end{gather*}
\end{small}

From this it is clear that the multiplicities determine all the parameters of the scheme.

In the 4-class case, the parameters are entirely determined by the quotient's multiplicities
(with a chosen $Q$-polynomial ordering) and $m_1$.  To search, we take a strongly regular graph
parameter set, choose one of two possible orderings for its multiplicities, calling its multiplicities
$m_0 = 1$, $m_2$, $m_4$.  From the absolute bound, we have $1+m_2 \leq \frac{m_1(m_1+1)}{2}$, and
from the positivity of $c_2^*$ we have $\frac{(m_2-m_1+1)m_1}{m_2} \geq 0$.  We then search over
all $\sqrt{2(1+m_2)} - \frac{1}{2} \leq m_1 \leq m_2$, checking the conditions above.
Given that we are iterating over SRG parameters together with two orderings and one integer,
this search is very fast.  The limitation of the table to $10000$ vertices is mainly readability
and practicality.  The third author has unpublished tables (without comments or details)
to $100000$ vertices, and could probably go much further without trouble.

We note that $Q$-bipartite schemes with 5-classes are very similar,
except we must iterate over both $m_1$ and $m_3$.  Again, this is
a very quick search, but the relative scarcity of 5-class parameter
sets makes listing up to $50000$ vertices, with annotation, manageable.
The table actually goes slightly higher, to $50520$ vertices,
because of the existence of an example on that number of vertices.

The trickiest search was the primitive 3-class $Q$-polynomial parameter sets.
In this case, there is no non-trivial quotient scheme to build on.

We use the following observation.

\begin{theorem}
A primitive $Q$-polynomial association scheme of $3$ classes
must have a matrix $L_i$ with $4$ distinct eigenvalues.
\end{theorem}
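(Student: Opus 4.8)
The plan is to reduce the statement to a question about the first eigenmatrix $P$, move to the dual parameters via the $Q$-polynomial structure, and then eliminate the single configuration that survives by bringing in the Krein conditions. First, recall that $L_i$ is the matrix of multiplication by $A_i$ on the Bose--Mesner algebra in the basis $A_0,\dots,A_D$; in the basis $E_0,\dots,E_D$ this multiplication is $\diag(P_{0i},\dots,P_{Di})$, so the eigenvalues of $L_i$ are exactly the entries $P_{0i},\dots,P_{Di}$ of the $i$-th column of $P$, and $L_i$ has $D+1=4$ distinct eigenvalues iff $P_{0i},\dots,P_{3i}$ are pairwise distinct. Since the scheme is primitive, $(X,R_i)$ is connected and regular of valency $P_{0i}=n_i$, so by Perron--Frobenius $n_i$ is a simple eigenvalue of $A_i$ and differs from $P_{1i},P_{2i},P_{3i}$. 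Hence $L_i$ fails to have $4$ distinct eigenvalues precisely when two of $P_{1i},P_{2i},P_{3i}$ coincide, and I would assume toward a contradiction that this occurs for every $i\in\{1,2,3\}$.

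Next I would pass to the dual side. Fix a $Q$-polynomial ordering $E_0,E_1,E_2,E_3$ with polynomials $v^*_j$ of degree $j$ (so $v^*_0=1$, $v^*_1(t)=t$), put $s_i:=Q_{i1}$ (pairwise distinct, else two rows of $Q$ coincide and $Q$ is singular), and set $h_j(t):=v^*_j(t)/m_j$. From $m_jP_{ji}=n_iQ_{ij}$ and $Q_{ij}=v^*_j(s_i)$ one gets $P_{ji}/n_i=h_j(s_i)$, and three facts come out: $(\mathrm a)$ since $s_0=Q_{01}=m_1$ and $v^*_j(m_1)=Q_{0j}=m_j$, one has $h_j(s_0)=1$ for all $j$, so $s_0$ is a common zero of $h_1-h_2$, $h_1-h_3$ and $h_2-h_3$; $(\mathrm b)$ $(h_1-h_2)+(h_2-h_3)=h_1-h_3$, so at no point do exactly two of these three differences vanish; $(\mathrm c)$ $\deg(h_1-h_2)=2$ whereas $\deg(h_1-h_3)=\deg(h_2-h_3)=3$. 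Under the assumption, each $s_i$ with $i\ge1$ is a zero of at least one difference, hence -- by $(\mathrm b)$ -- of exactly one, or of all three.

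If all three differences vanish at some $s_i$ with $i\ge1$, then $P_{1i}=P_{2i}=P_{3i}$, so $A_i$ has two distinct eigenvalues; a connected regular graph with two distinct eigenvalues is complete, so $A_i=J-I$, forcing the remaining relation matrices to vanish -- impossible. So each $s_i$ ($i\ge1$) is a zero of exactly one difference; let $\pi(i)\subseteq\{1,2,3\}$ be the corresponding pair of eigenspaces. If $\pi(i)=\pi(i')$ for distinct $i,i'$, then the two rows of $P$ indexed by that pair agree in columns $0,i,i'$, hence outside a single column, and the relations $\sum_\ell P_{a\ell}=0$ (for $a\ge1$) then force agreement in that column as well -- contradicting invertibility of $P$. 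Therefore $\pi$ is a bijection onto $\big\{\{1,2\},\{1,3\},\{2,3\}\big\}$; after relabelling $R_1,R_2,R_3$ I may take $\pi(1)=\{1,2\}$, $\pi(2)=\{1,3\}$, $\pi(3)=\{2,3\}$, and then the coincidences together with the row-sum relations pin $P$ down to the two-parameter family
\[
P=\begin{pmatrix}
1 & n_1 & n_2 & n_3\\
1 & p & q & u\\
1 & p & q-\delta & u+\delta\\
1 & p-\delta & q & u+\delta
\end{pmatrix},\qquad u=-1-p-q,\quad\delta\neq0.
\]

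It remains -- and this is where I expect the real work to be -- to show that no member of this family is $Q$-polynomial; so far only primitivity has been used in an essential way, and indeed this family does contain genuine $3$-class schemes (e.g.\ amorphic ones). The plan for the last step is to invoke the part of the $Q$-polynomial condition not yet used, namely that $L^*_1$ be tridiagonal, which for a $3$-class scheme amounts to the single Krein equation $q^3_{11}=0$ (together with the nonvanishing of the off-diagonal entries of $L^*_1$). Expanding $q^k_{ij}=\frac{m_im_j}{|X|}\sum_\ell P_{i\ell}P_{j\ell}P_{k\ell}/n_\ell^2$ on the family above and combining $q^3_{11}=0$ with the normalisation $\deg(h_1-h_2)=2$ -- which, since $h_1-h_2$ already vanishes at $s_0$ and $s_1$, reads $n_2(s_2-s_0)(s_2-s_1)=-\,n_3(s_3-s_0)(s_3-s_1)$ -- and with the requirements that $m_1,m_2,m_3$ be positive integers and that all $q^k_{ij}$ be nonnegative, one should be able to conclude that no consistent solution exists (for instance that the constraints force $\delta=0$), the desired contradiction. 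Organising this final case analysis cleanly, rather than checking it parameter set by parameter set, is the main obstacle; everything preceding it is routine.
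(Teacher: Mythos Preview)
Your reduction to the two-parameter ``amorphic'' form of $P$ is correct and, if anything, more carefully argued than the paper's (which simply asserts that after reordering the eigenspaces one arrives at this shape). The problem is the final step. You propose to derive a contradiction from $q^3_{11}=0$ combined with nonnegativity of the Krein parameters and positivity of the multiplicities, and you concede this is not carried out. Note that amorphic $3$-class schemes genuinely exist, so the Krein inequalities by themselves cannot yield a contradiction; you would have to show directly that $q^3_{11}\ne 0$ for your chosen ordering, which is possible but laborious, and certainly not the ``case analysis'' you anticipate needing.

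The paper's finish is a single computation for which you already have all the ingredients. From your $P$ one simply computes $Q=|X|\,P^{-1}$; in your parameters (so $a=p$, $b=p-\delta$, $f=u+\delta$, $a-b=\delta$) one reads off
\[
Q_{11}=Q_{21}=\frac{n_3-(u+\delta)}{\delta},
\]
hence $s_1=s_2$. But you opened the argument by fixing a $Q$-polynomial ordering and observing that the $s_i=Q_{i1}$ must be pairwise distinct (else two rows of $Q$ coincide and $Q$ is singular). That is already the contradiction --- no Krein parameters, no inequalities, no case analysis. The paper actually checks that \emph{every} nontrivial column of $Q$ has a repeated entry, so no ordering of eigenspaces can serve as a $Q$-polynomial one; but in your setup, where the ordering is fixed from the start, the single equality $Q_{11}=Q_{21}$ already finishes the proof.
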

\begin{proof}
Assume not.  If a matrix $A_i$ has only two distinct eigenvalues, it is either complete,
contradicting the fact that it is a 3-class scheme, or a disjoint union of more than one
complete graph, contradicting the fact the scheme is primitive.
Therefore, the only case left to consider is when $A_1, A_2, A_3$ all have three distinct eigenvalues.
(We note in passing that this implies that the corresponding graphs are all strongly regular.
In this case, the scheme would be called ``amorphic'',
for more on amorphic schemes see~\cite{IIM}).

After reordering the eigenspaces and noting that the $P$-matrix is nonsingular, we are left with the following for $P$:
\[
\begin{pmatrix}
1 & n_1 & n_2 & n_3 \\
1 & a & c & e  \\
1 & a  & d & f \\
1 & b & c & f
\end{pmatrix},
\]
where $n = 1+n_1+n_2+n_3$, and $d = b+c-a$, $e = -1-a-c$.  Solving for $Q = nP^{-1}$ we obtain:
\[
\begin{pmatrix}
1 & \frac{n_3+(n-1)f}{a-b} & \frac{n_2+(n-1)c}{a-b} & \frac{n_1+(n-1)a}{a-b} \\
1 & \frac{n_3-f}{a-b} & \frac{n_2-c}{a-b} & \frac{-n+n_1-a}{a-b}  \\
1 & \frac{n_3-f}{a-b} & \frac{-n+n_2-c}{a-b} & \frac{n_1-a}{a-b} \\
1 & \frac{-n+n_3-f}{a-b} & \frac{n_2-c}{a-b} & \frac{n_1-a}{a-b}
\end{pmatrix}.\]

We arrive at our final contradiction: since each column contains a repeated entry,
the $Q$-matrix cannot be generated by one column via polynomials.
\end{proof}

We note that, in fact, all $Q$-polynomial $D$-class schemes must have a relation
with $D+1$ distinct eigenvalues.  However, the above theorem and its proof is sufficient
for our needs.

From this we conclude that each 3-class primitive $Q$-polynomial scheme has an adjacency matrix,
which we label $A_1$, which has four distinct eigenvalues.  Then the corresponding $4 \times 4$
intersection matrix $L_1$ has four distinct eigenvalues.
From this matrix, all of the other parameters may be determined.
In particular,  from~\cite[Proposition~2.2.2]{BCN},
the left-eigenvectors of $L_1$, normalized so their leftmost entry is $1$,
must be the rows of $P$.

The rest of the parameters can be derived from the equations:
\begin{eqnarray*}
  L_j &=& P^{-1} \diag( P_{0j}, P_{1j}, \dots, P_{Dj} ) \, P, \\
  L_j^* &=& Q^{-1} \diag( Q_{0j}, Q_{1j}, \dots, Q_{Dj} ) \, Q.
\end{eqnarray*}

However,
checking the $Q$-polynomial condition
is done before the computation of all parameters.
We use the following theorem, a proof of which can be found in~\cite{MW}.

\begin{theorem}  Let $L_i$ be an intersection matrix of a $D$-class association scheme,
where $L_i$ has exactly $D+1$ distinct eigenvalues.  Then the scheme is $Q$-polynomial
if and only if there is a Vandermonde matrix $U$ such that $U^{-1} L_i U=T$
where $T$ is upper triangular.
\end{theorem}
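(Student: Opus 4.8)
The plan is to translate both the $Q$-polynomial property and the ``a Vandermonde matrix triangularizes $L_i$'' property into statements about the eigenvector matrix of $L_i$, and then observe that they coincide. Throughout I will use the facts already recorded: $L_j = P^{-1}\diag(P_{0j},\dots,P_{Dj})\,P$ together with $Q = nP^{-1}$, which combine to give $Q^{-1}L_jQ = \diag(P_{0j},\dots,P_{Dj})$; in other words the columns of $Q$ are right eigenvectors of every $L_j$, and the first column of $Q$ is the all-ones vector $\mathbf{1}$ (since $E_0 = \frac{1}{n}J$), which is the eigenvector of $L_j$ for the eigenvalue $P_{0j} = n_j$. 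I will also use that, for a $Q$-polynomial ordering of eigenspaces, the entries $Q_{j1}$ ($0 \le j \le D$) are pairwise distinct, since otherwise two rows of $Q$ would coincide, contradicting its nonsingularity.

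For the forward direction, suppose the scheme is $Q$-polynomial and let $\hat Q$ be the second eigenmatrix for a $Q$-polynomial ordering, so $\hat Q_{ji} = v_i^*(\hat Q_{j1})$ with $\deg v_i^* = i$. Writing $v_i^*(t) = \sum_{k \le i} w_{ki}t^k$, set $V_{jk} = (\hat Q_{j1})^k$ and $W_{ki} = w_{ki}$; then $V$ is a genuine invertible Vandermonde matrix (its nodes $\hat Q_{j1}$ are distinct), $W$ is upper triangular with nonzero diagonal, and $\hat Q = VW$. Since $\hat Q$ is obtained from $Q$ by permuting columns, it still diagonalizes $L_i$, say $\hat Q^{-1}L_i\hat Q = \hat D_i$ with $\hat D_i$ diagonal. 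Substituting $\hat Q = VW$ gives $V^{-1}L_iV = W\hat D_iW^{-1}$, which is upper triangular because $W$ is upper triangular and $\hat D_i$ is diagonal; so $U := V$ works.

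For the converse, suppose $U^{-1}L_iU = T$ is upper triangular, with $U$ a nonsingular Vandermonde matrix, $U_{jk} = x_j^k$ (so the nodes $x_0,\dots,x_D$ are distinct and the first column of $U$ is $\mathbf{1}$). By hypothesis $L_i$, hence $T$, has $D+1$ distinct eigenvalues, which are exactly the diagonal entries of $T$ and form a permutation of $P_{0i},\dots,P_{Di}$. The key lemma is that an upper-triangular matrix with distinct diagonal entries is diagonalized by an upper-triangular matrix: solving $(T - T_{kk}I)v = 0$ by back-substitution from the last coordinate upward shows the eigenvector for $T_{kk}$ is supported on coordinates $0,\dots,k$ and may be normalized so its $k$-th coordinate is $1$; stacking these eigenvectors as columns yields a nonsingular upper-triangular $S$ with $S^{-1}TS = \diag(T_{00},\dots,T_{DD})$. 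Then $L_i = (US)\,(S^{-1}TS)\,(US)^{-1}$, so the columns of $US$ are right eigenvectors of $L_i$; as the eigenvalues are simple, each column of $US$ is a scalar multiple of the corresponding column of $Q$ once eigenvalues are matched, i.e. $US = \hat Q\Lambda$ for a column permutation $\hat Q$ of $Q$ and a nonsingular diagonal $\Lambda$. Note the column of $US$ proportional to $\mathbf{1}$ is its $0$-th one (equal to $S_{00}\mathbf{1}$), which therefore matches the $0$-th column of $Q$, so $\hat Q$ keeps $E_0$ first. Put $W := S\Lambda^{-1}$, still nonsingular and upper triangular, so $\hat Q = UW$. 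Since $U$ is Vandermonde, $\hat Q_{ji} = \sum_{k \le i} W_{ki}x_j^k$ is a polynomial of degree exactly $i$ in $x_j$; and since $\hat Q_{j1} = W_{01} + W_{11}x_j$ is an invertible affine function of $x_j$, we may re-express $\hat Q_{ji}$ as a polynomial of degree $i$ in $\hat Q_{j1}$. Hence the reordering of eigenspaces recorded by $\hat Q$ is a $Q$-polynomial ordering.

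I expect the \textbf{main obstacle} to lie in the converse, specifically in two pieces of bookkeeping: proving cleanly that the Vandermonde-triangularization can be refined to an upper-triangular diagonalization (the lemma above), and tracking the permutation of the eigenvalues $P_{ji}$ together with the diagonal rescaling of eigenvectors so that the all-ones column of $U$ is matched with the all-ones column of $Q$ (which forces the eigenvalue $n_i$ into position $0$) and so that the change of variable between $x_j$ and $\hat Q_{j1}$ comes out affine rather than merely polynomial. The forward direction and all the remaining steps are direct manipulations of the identities relating $P$, $Q$, and the matrices $L_j$.
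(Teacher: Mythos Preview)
The paper does not supply its own proof of this theorem; it quotes the statement and refers the reader to \cite{MW} for a proof, so there is nothing in the paper to compare your argument against directly.

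Your proof is correct and is the natural one. Both directions amount to the factorization $\hat Q = U W$ with $U$ Vandermonde and $W$ upper-triangular invertible, combined with the identity $Q^{-1}L_jQ = \diag(P_{0j},\dots,P_{Dj})$ (equivalently, that the columns of $Q$ are the right eigenvectors of every $L_j$). In the forward direction you read off $U$ from the nodes $\hat Q_{j1}$ and $W$ from the coefficients of the $v_i^*$; in the converse you rebuild $\hat Q$ as $U(S\Lambda^{-1})$. The two points you flagged as possible obstacles are in fact the only places where care is needed, and you handle both correctly: the upper-triangular diagonalization $S^{-1}TS = \diag(T_{00},\dots,T_{DD})$ is exactly where the hypothesis that $L_i$ has $D+1$ distinct eigenvalues is used (otherwise $T$ need not be diagonalizable at all, let alone by an upper-triangular matrix), and the observation that column $0$ of $US$ equals $S_{00}\mathbf{1}$ forces $\pi(0)=0$, after which $W_{11}\ne 0$ makes $x_j \mapsto \hat Q_{j1}$ an invertible affine map, so the degree-$i$ polynomials in $x_j$ become degree-$i$ polynomials in $\hat Q_{j1}$.
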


It is not hard to show that without loss of generality we can take $T_{01}$ to be 0, implying that the first column of $U$ is an eigenvector of $L_1$.  We only then need to iterate over the three (nontrivial) eigenvectors of $L_1$ to check this condition.
If the $Q$-polynomial condition is met, the rest of the parameters are computed and checked for the above conditions.

The schemes are then split into cases, depending on whether there is a strongly regular graph as a relation, and whether the splitting field is rational or not:
\begin{enumerate}

\item  Diameter $3$ distance-regular graphs (DRG for short).

\item  No diameter $3$ DRG, there is a strongly regular graph as a relation,
the splitting field is the rational field.

\item  No diameter $3$ DRG, there is a strongly regular graph as a relation,
the splitting field is a degree-$2$ extension of the rational field.

\item  No diameter $3$ DRG, there is no strongly regular graph as a relation,
the splitting field is the rational field.

\item  No diameter $3$ DRG, there is no strongly regular graph as a relation,
the splitting field is a degree-$2$ extension of the rational field.

\end{enumerate}

We note that we do not have any examples of primitive,
3-class $Q$-polynomial schemes with an irrational splitting field,
but there are open parameter sets of such.
It would be interesting to determine if these exist.
We also point out that all the feasible parameter sets known
to us have rational Krein parameters.

\medskip

{\bf Case 1.}
For DRG's, we iterated over the number of vertices,
intersection array and valencies.
The order was $n$, $b_0=n_1$, $b_1$, $n_2$
(noting $n_2$ is a divisor of $n_1 b_1$),
then $b_2$
(noting $b_2$ must be a multiple of $\frac{n_3}{\gcd(n_2,n_3)}$,
where $n_3 = n-n_1-n_2$),
from which the rest could be determined.

When there is no DRG, it is tempting to try to formally dualize the above process.  However, the Krein parameters of a scheme do not have to be integral, or even rational.  For this reason, it seemed more advantageous to iterate over parameters that needed to be integral, namely the parameters $p_{ij}^k$.  All arithmetic was done in MAGMA using the rational field, or a splitting field of a degree two irreducible polynomial over the rationals.  Floating point arithmetic was avoided to minimize numerical errors.

\medskip

For the rest of the cases, $L_1$ and the valencies were iterated over.  In particular, the parameters $a=p_{12}^1, b=p_{13}^1$ and $c = p_{13}^2$, together with $n, n_1, n_2$ determine the rest of $L_1$, noting that $a+b \leq n_1-1$ and $c \leq n_1 - \frac{n_1a}{n_2}$.  Any matrix without 4 distinct eigenvalues or with an irreducible cubic factor in its characteristic polynomial was discarded.

\medskip

{\bf Cases 2 and 3.}
In these cases, we iterate over strongly regular graphs first,
with parameters $\nklm$.
We choose $A_3$ to be the adjacency matrix
of the strongly regular graph relation,
and $L_1$, $L_2$ to be fissions of the complement.
Given this, the choice of $n_1$ will determine $n_2$.
The possibilities for $n_1$ can be narrowed
by observing that $p_{33}^1$ = $\mu$,
$n_3 = k$ and $ p_{33}^1 n_1 = p_{13}^3 n_3$,
implying that $n_1$ is divisible by $\frac{n_3}{ \gcd(n_3,\mu)}$.

Using similar identities,
we find $b$ is divisible by $\frac{n_3}{\gcd(n_1,n_3)}$,
$a$ is divisible by $\frac{n_2}{\gcd(n_1,n_2)}$,
and $c = \frac{n_1 (n_3-b-\mu) }{n_2}$.
After choosing these parameters all of $L_1$ follows.

\medskip

{\bf Cases 4 and 5.}
In these cases,
we know $L_1, L_2$ and $L_3$ all have $4$ distinct eigenvalues.
Therefore, we can assume $n_1$ is the smallest valency, and that $a \leq b$.
Using $a$ is divisible by $\frac{n_2}{\gcd(n_1,n_2)}$,
$b$ is divisible by $\frac{n_3}{\gcd(n_1,n_3)}$, and $n_2$ divides $an_1$,
we choose $n_1, a, n_2, b, c$, from which the rest is determined.
This is the slowest part of the search,
and the reason the primitive table goes to $2800$ vertices.

\medskip

We close with some comments on the irrational splitting field case.
The $2$-class primitive $Q$-polynomial case
is equivalent to (primitive) strongly regular graphs.
The only case where strongly regular graphs
have an irrational splitting field is the so-called ``half-case'',
when the graph has valency $\frac{n-1}{2}$.
Such graphs do exist,
for example the Paley graphs for non-square prime powers $q$
with $q$ congruent to $1$ modulo $4$.
We note that no primitive $Q$-polynomial schemes with more than $2$ classes
and a quadratic splitting field are known.
All feasible parameter sets we know of are $3$-class
and have a strongly regular graph relation (case 3).
The corresponding strongly regular graphs are also all unknown
(see~\cite{Bsrg}).
We have no feasible parameter set for case 5.
However, one case 5 parameter set satisfied all criteria
except the handshaking lemma.
It is listed below, though not included in the online table.
Given this, we expect feasible parameter sets for case 5 to exist,
but may be quite large.
\begin{small}
\[
P \ee \begin{pmatrix}
1 & 285 & 285 & 405\\
1 & \!19\pp 8\sqrt{19}\! & \!-38\pp 1\sqrt{19}\! & \!18\mm 9\sqrt{19}\!\\
1 & -3 & 5 & -3\\
1 & \!19\mm 8\sqrt{19}\! & \!-38\mm 1\sqrt{19}\! & \!18\pp 9\sqrt{19}\!\\
\end{pmatrix}\!, \
Q \ee \begin{pmatrix}
1 & 60 & 855 & 60\\
1 & \frac{76+32\sqrt{19}}{19} & -9 & \frac{76-32\sqrt{19}}{19}\\
1 & \!\frac{-152+4\sqrt{19}}{19}\! & 15 & \!\frac{-152-4\sqrt{19}}{19}\!\\
1 & \frac{8-4\sqrt{19}}{3} & \frac{-19}{3} & \frac{8+4\sqrt{19}}{3}\\
\end{pmatrix}\!,
\]
\[
L_1 \ee \begin{pmatrix}
0 & 285 & 0 & 0\\
1 & 116 & 60 & 108\\
0 & 60 & 90 & 135\\
0 & 76 & 95 & 114\\
\end{pmatrix}, \quad
L_2 \ee \begin{pmatrix}
0 & 0 & 285 & 0\\
0 & 60 & 90 & 135\\
1 & 90 & 59 & 135\\
0 & 95 & 95 & 95\\
\end{pmatrix}, \quad
L_3 \ee \begin{pmatrix}
0 & 0 & 0 & 405\\
0 & 108 & 135 & 162\\
0 & 135 & 135 & 135\\
1 & 114 & 95 & 195\\
\end{pmatrix},
\]
\begin{align*}
  L_1^* &= \begin{pmatrix}
0 & 60 & 0 & 0\\
1 & \frac{400+32\sqrt{19}}{61} & \frac{3199-32\sqrt{19}}{61} & 0\\
0 & \frac{12796-128\sqrt{19}}{3477} & \frac{181184+128\sqrt{19}}{3477} & \frac{80}{19}\\
0 & 0 & 60 & 0\\
\end{pmatrix},\\
L_2^* &= \begin{pmatrix}
0 & 0 & 855 & 0\\
0 & \frac{3199-32\sqrt{19}}{61} & \frac{45296+32\sqrt{19}}{61} & 60\\
1 & \frac{181184+128\sqrt{19}}{3477} & \frac{137210}{183} & \frac{181184-128\sqrt{19}}{3477}\\
0 & 60 & \frac{45296-32\sqrt{19}}{61} & \frac{3199+32\sqrt{19}}{61}\\
\end{pmatrix},\\
L_3^* &= \begin{pmatrix}
0 & 0 & 0 & 60\\
0 & 0 & 60 & 0\\
0 & \frac{80}{19} & \frac{181184-128\sqrt{19}}{3477} & \frac{12796+128\sqrt{19}}{3477}\\
1 & 0 & \frac{3199+32\sqrt{19}}{61} & \frac{400-32\sqrt{19}}{61}\\
\end{pmatrix}.
\end{align*}
\end{small}

While feasible parameters may exist,
the complete lack of examples elicits the following question:
\begin{question}
Do all $3$-class primitive $Q$-polynomial schemes have a rational splitting field?
\end{question}

This is a special case
of the so-called ``Sensible Caveman'' conjecture of William J.~Martin:
\begin{conjecture}
For $Q$-polynomial schemes of $3$ or more classes,
if the scheme is primitive then its splitting field is rational.
\end{conjecture}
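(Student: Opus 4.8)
The statement is a conjecture and, as the surrounding discussion makes clear, it is open; so what follows is a line of attack rather than a complete argument. Let $K$ denote the splitting field, i.e.\ the subfield of $\mathbb{R}$ generated by all the eigenvalues $P_{ji}$ of the scheme. Suppose, for contradiction, that $K\neq\mathbb{Q}$; by the bound on the degree of the splitting field of a $Q$-polynomial scheme (the splitting-field condition imposed on the tables above, cf.~\cite{MW2}) we may take $K/\mathbb{Q}$ to be quadratic, hence Galois, with a unique nontrivial automorphism $\sigma$. Each $A_i$ has entries in $\{0,1\}$, so $\sigma$ fixes every $A_i$ while permuting the primitive idempotents $E_0,\dots,E_D$ of the adjacency algebra; this produces an involution $\tau$ on $\{0,1,\dots,D\}$ with $\tau(0)=0$, $\sigma(E_j)=E_{\tau(j)}$, and hence $\sigma(P_{ji})=P_{\tau(j)i}$, $\sigma(q^k_{ij})=q^{\tau(k)}_{\tau(i)\tau(j)}$, and $m_j=m_{\tau(j)}$. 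If $\tau$ were the identity, every $P_{ji}$ would be fixed by $\sigma$ and hence rational, forcing $K=\mathbb{Q}$; so $\tau\neq\mathrm{id}$, and the equalities $m_j=m_{\tau(j)}$ are the higher-$D$ shadow of the equal multiplicities that characterise the $D=2$ ``half-case'' (conference graphs).

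The plan is to contradict $\tau\neq\mathrm{id}$ using primitivity and $D\ge 3$. The crucial point is that $\sigma$ preserves all the combinatorial data, so applying it to the fixed $Q$-polynomial ordering $E_0,E_1,\dots,E_D$ yields a second $Q$-polynomial ordering $E_0,E_{\tau(1)},\dots,E_{\tau(D)}$, whose Krein array and whose polynomials $v^*_i$ are the $\sigma$-conjugates of the original ones. A $Q$-polynomial ordering is determined by its generator $E_1$, so these two orderings coincide precisely when $\tau=\mathrm{id}$; in particular $\tau(1)\neq 1$, and $E_1$, $E_{\tau(1)}$ are distinct, Galois-conjugate idempotents. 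Hence a primitive $Q$-polynomial scheme of $D\ge 3$ classes with irrational splitting field would have to admit two distinct $Q$-polynomial orderings. The remaining -- and decisive -- step is to rule this out, i.e.\ to show that a primitive $Q$-polynomial scheme of $D\ge 3$ classes has a \emph{unique} $Q$-polynomial ordering. This is the cometric analogue of what is known on the metric side, where extra $P$-polynomial orderings of a distance-regular graph of diameter $\ge 3$ are extremely restrictive and essentially force imprimitivity (antipodality or bipartiteness), up to a short list of exceptions.

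I expect this last step to be the genuine obstacle, and it is hard for exactly the reason emphasised throughout the paper: there is no combinatorial characterization of $Q$-polynomial schemes, so the argument available on the distance-regular side -- which exploits the graph metric and the triangle inequality for the intersection numbers -- has no counterpart here, and one must instead argue purely with the two Krein arrays, their (distinct) dual-eigenvalue sequences, and the Galois relations above. Should the cometric side admit sporadic exceptions (as the metric classification might suggest), these would have to be enumerated and each checked to have a rational splitting field. A reasonable first target is the $D=3$ case, namely the Question stated just above the conjecture: there the only candidate for a second $Q$-polynomial ordering is the reversal $E_0,E_3,E_2,E_1$; one can parametrize the eigenmatrices of a $3$-class primitive $Q$-polynomial scheme by a few integers (as in the search described in this section), read off exactly when this reversal is again $Q$-polynomial and how that constrains $K$, and for the finitely many feasible $3$-class parameter sets with quadratic splitting field then attempt to finish via the triple-intersection-number analysis of Section~\ref{sec:nonex}, just as with the nonexistence results obtained there. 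One should also verify that the degree-at-most-$2$ reduction is available for arbitrary primitive $Q$-polynomial schemes, not merely the families tabulated here.
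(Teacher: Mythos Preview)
The paper offers no proof: the statement is presented as Martin's ``Sensible Caveman'' conjecture and is left open, so there is nothing in the paper to compare your argument to. Your proposal is correctly framed as a strategy rather than a proof, and the Galois-permutation-of-idempotents idea is the natural starting point; a few remarks on the gaps follow.

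First, your final caveat is unnecessary: the degree-$2$ bound on the splitting field of a $Q$-polynomial scheme is a theorem of Martin and Williford (the reference~\cite{MW2} in the paper), valid in full generality, not just for the tabulated families.

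Second, and more seriously, the ``decisive step'' as you state it --- that a primitive $Q$-polynomial scheme with $D\ge 3$ has a unique $Q$-polynomial ordering --- is simply false, so it cannot be proved. There exist primitive $Q$-polynomial schemes with two $Q$-polynomial orderings and perfectly rational splitting fields; indeed the paper itself uses a second $Q$-polynomial ordering for the (primitive) parameter set $\langle 225,24\rangle$, and Suzuki's classification of additional $Q$-polynomial orderings (the cometric analogue of the Bannai--Bannai/Eiichi result you allude to) provides a short but nonempty list of possibilities. What your reduction actually shows is that the Galois involution $\tau$ must send one $Q$-polynomial ordering to another; the honest remaining problem is to run through Suzuki's list and argue that none of the allowed permutations can arise from Galois conjugation on a primitive scheme. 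That is a concrete programme, but it is not known to succeed.

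Third, for $D=3$ you claim the only candidate second ordering is the reversal $E_0,E_3,E_2,E_1$, i.e.\ $\tau=(1\ 3)$. Since $\sigma$ has order $2$, $\tau$ must be an involution fixing $0$; the possibility $\tau=(2\ 3)$ is excluded because then $\tau(1)=1$ and the ordering would be unchanged, but $\tau=(1\ 2)$, giving the ordering $E_0,E_2,E_1,E_3$, is not excluded by your argument and must also be handled. (It forces $m_1=m_2$ rather than $m_1=m_3$.) So even in the $3$-class case your case analysis is incomplete.
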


\section{Nonexistence results}\label{sec:nonex}

We derived our nonexistence results
by analyzing triple intersection numbers
of $Q$-poly\-no\-mi\-al association schemes.
For some choice of relations $R_r, R_s, R_t$,
the system of Diophantine equations
derived from \eqref{eqn:triple} and Theorem~\ref{thm:krein0}
may have multiple nonnegative solutions,
each giving the possible values of the triple intersection numbers
with respect to a triple $(x, y, z)$
with $(x, y) \in R_r$, $(x, z) \in R_s$ and $(y, z) \in R_t$.
However, in certain cases, there might be no nonnegative solutions
-- in this case,
we may conclude that an association scheme with the given parameters
does not exist.

Even when there are solutions for all choices of $R_r, R_s, R_t$
such that $p^t_{rs} \ne 0$,
sometimes nonexistence can be derived by other means.
We may, for example, employ double counting.

\begin{proposition}\label{prop:dblcnt}
Let $x$ and $y$ be vertices of an association scheme with $(x, y) \in R_r$.
Suppose that $\alpha_1, \alpha_2, \dots, \alpha_m$ are distinct integers
such that there are precisely $\kappa_\ell$ vertices $z$
with $(x, z) \in R_s$, $(y, z) \in R_t$
and $\sV{x & y & z}{i & j & k} = \alpha_\ell$
$(1 \le \ell \le m$, $\sum_{\ell=1}^m \kappa_\ell = p^r_{st})$,
and $\beta_1, \beta_2, \dots, \beta_n$ are distinct integers
such that there are precisely $\lambda_\ell$ vertices $w$
with $(w, x) \in R_i$, $(w, y) \in R_j$
and $\sV{w & x & y}{k & s & t} = \beta_\ell$
$(1 \le \ell \le n$, $\sum_{\ell=1}^n \lambda_\ell = p^r_{ij})$.
Then,
$$
\sum_{\ell=1}^m \kappa_\ell \alpha_\ell =
\sum_{\ell=1}^n \lambda_\ell \beta_\ell .
$$
\end{proposition}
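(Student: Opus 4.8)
The plan is to prove the identity by double counting a single set of ordered pairs of vertices. Concretely, I would introduce
\[
S = \{ (z,w) \in X \times X : (x,z) \in R_s,\ (y,z) \in R_t,\ (x,w) \in R_i,\ (y,w) \in R_j,\ (z,w) \in R_k \},
\]
and compute $|S|$ in two ways, once by summing over the first coordinate $z$ and once by summing over the second coordinate $w$. The whole content of the proposition is that these two expressions are exactly the two sides of the claimed equality.

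First I would fix the first coordinate. A vertex $z$ can occur in a pair of $S$ only if $(x,z) \in R_s$ and $(y,z) \in R_t$; since $(x,y) \in R_r$, there are exactly $p^r_{st}$ such $z$, and these are precisely the vertices counted in the hypothesis. For each such $z$, the number of $w$ with $(z,w) \in S$ is the number of $w$ with $(x,w)\in R_i$, $(y,w)\in R_j$ and $(z,w)\in R_k$, which is $\sV{x & y & z}{i & j & k}$ by definition. Hence $|S| = \sum_z \sV{x & y & z}{i & j & k}$, the sum over the $p^r_{st}$ vertices above; grouping these $z$ according to the value of the triple intersection number -- the value $\alpha_\ell$ being attained by exactly $\kappa_\ell$ of them -- gives $|S| = \sum_{\ell=1}^m \kappa_\ell \alpha_\ell$. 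Symmetrically, a vertex $w$ occurs in a pair of $S$ only if $(x,w)\in R_i$ and $(y,w)\in R_j$ (equivalently $(w,x)\in R_i$, $(w,y)\in R_j$, as the relations are symmetric), and there are exactly $p^r_{ij}$ such $w$, namely those in the hypothesis. For each such $w$, the number of $z$ with $(z,w)\in S$ is the number of $z$ with $(x,z)\in R_s$, $(y,z)\in R_t$ and $(w,z)\in R_k$, which is $\sV{w & x & y}{k & s & t}$. Grouping the $w$ by the value $\beta_\ell$ of this number gives $|S| = \sum_{\ell=1}^n \lambda_\ell \beta_\ell$, and equating the two expressions for $|S|$ yields the proposition.

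I do not expect any real obstacle here; the argument is pure double counting once the set $S$ is written down explicitly. The only point requiring a little care is the bookkeeping with the bracket notation: one must match the roles of $(s,t)$ against $(i,j)$ and the placement of the index $k$ correctly when the "base pair'' is switched from the $z$-side to the $w$-side, and use symmetry of the relations to identify $(z,w)\in R_k$ with $(w,z)\in R_k$. With $S$ in hand, both counts are immediate and the identity follows.
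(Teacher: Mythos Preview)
Your proof is correct and is exactly the paper's argument: the paper's proof consists of the single sentence ``Count the number of pairs $(w,z)$ with $(x,z)\in R_s$, $(y,z)\in R_t$, $(w,x)\in R_i$, $(w,y)\in R_j$ and $(w,z)\in R_k$,'' which is precisely your set $S$ counted in the two ways you describe.
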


\begin{proof}
Count the number of pairs $(w, z)$
with $(x, z) \in R_s$, $(y, z) \in R_t$, $(w, x) \in R_i$, $(w, y) \in R_j$
and $(w, z) \in R_k$.
\end{proof}

We consider the special case of Proposition~\ref{prop:dblcnt}
when a triple intersection number is zero for all triples of vertices
in some given relations.

\begin{corollary}\label{cor:dblcnt}
Suppose that for all vertices $x, y, z$ of an association scheme
with $(x, y) \in R_r$, $(x, z) \in R_s$, $(y, z) \in R_t$,
$\sV{x & y & z}{i & j & k} = 0$ holds.
Then, $\sV{w & x & y}{k & s & t} = 0$ holds
for all vertices $w, x, y$
with $(w, x) \in R_i$, $(w, y) \in R_j$ and $(x, y) \in R_r$.
\end{corollary}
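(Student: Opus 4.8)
The plan is to obtain this directly from Proposition~\ref{prop:dblcnt}, exploiting the fact that triple intersection numbers are by definition nonnegative integers. First I would fix vertices $x, y$ with $(x, y) \in R_r$ and apply Proposition~\ref{prop:dblcnt} to this pair, using the indices $i, j, k, s, t$ as in the statement. On the left-hand side, the distinct integers $\alpha_1, \dots, \alpha_m$ are exactly the values taken by $\sV{x & y & z}{i & j & k}$ as $z$ ranges over the vertices with $(x, z) \in R_s$ and $(y, z) \in R_t$, and $\kappa_\ell$ counts how many such $z$ attain $\alpha_\ell$. By hypothesis every one of these values is $0$, so either there is no admissible $z$ (and the left sum is empty) or $m = 1$ with $\alpha_1 = 0$; in either case $\sum_{\ell=1}^m \kappa_\ell \alpha_\ell = 0$.

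Next I would read off the conclusion of Proposition~\ref{prop:dblcnt}, namely $\sum_{\ell=1}^n \lambda_\ell \beta_\ell = 0$, where the $\beta_\ell$ are the distinct values of $\sV{w & x & y}{k & s & t}$ over the vertices $w$ with $(w, x) \in R_i$ and $(w, y) \in R_j$, and $\lambda_\ell$ is the number of such $w$ attaining $\beta_\ell$. Each $\beta_\ell$, being a count of vertices, is nonnegative, and each $\lambda_\ell$ is nonnegative as well, so a sum of nonnegative terms vanishes only if every term does: $\lambda_\ell \beta_\ell = 0$ for all $\ell$. Hence whenever $\beta_\ell \ne 0$ we must have $\lambda_\ell = 0$, meaning no vertex $w$ with $(w, x) \in R_i$ and $(w, y) \in R_j$ attains a nonzero value of $\sV{w & x & y}{k & s & t}$. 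Equivalently, $\sV{w & x & y}{k & s & t} = 0$ for all such $w$, which is the assertion (the case $p^r_{ij} = 0$ being vacuous, and the fixed pair $x, y$ being arbitrary among pairs in $R_r$).

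There is no genuine obstacle here; the argument is a one-line specialisation of the proposition. The only points needing care are the bookkeeping in the degenerate cases — when there is no admissible $z$, so the left sum is empty, or no admissible $w$, so the right sum is empty — and the explicit appeal to the nonnegativity of the $\beta_\ell$ and $\lambda_\ell$, which is precisely what upgrades the vanishing of a single linear combination into the vanishing of each individual triple intersection number. Alternatively, one could prove the corollary from scratch by the same double count used for Proposition~\ref{prop:dblcnt}: for a fixed $w$ with $(w, x) \in R_i$ and $(w, y) \in R_j$, count the vertices $z$ with $(x, z) \in R_s$, $(y, z) \in R_t$ and $(w, z) \in R_k$; grouping this count by the relation between $x$ and $y$ shows it equals a sum of terms $\sV{x & y & z}{i & j & k}$, all of which are $0$ by hypothesis, so $\sV{w & x & y}{k & s & t} = 0$. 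Invoking the proposition is cleaner, so that is the route I would take.
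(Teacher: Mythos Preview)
Your proposal is correct and follows essentially the same route as the paper: apply Proposition~\ref{prop:dblcnt} with the left-hand side forced to vanish by hypothesis, then use nonnegativity of the $\beta_\ell$ and $\lambda_\ell$ to conclude every term on the right vanishes. The paper's proof is just a terser version of exactly this argument.
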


\begin{proof}
Apply Proposition~\ref{prop:dblcnt} to all $(x, y) \in R_r$,
with $m \le 1$ and $\alpha_1 = 0$.
Since $\beta_\ell$ and $\lambda_\ell$ ($1 \le \ell \le n$)
must be nonnegative,
it follows that $n \le 1$ and $\beta_1 = 0$.
\end{proof}

\subsection{Computer search}\label{ssec:comp}

The {\tt sage-drg} package~\cite{Vdrg, V} by the second author
for the SageMath computer algebra system~\cite{Sage}
has been used to perform computations
of triple intersection numbers of $Q$-polynomial association schemes
with Krein arrays that were marked as open
in the tables of feasible parameter sets by the third author~\cite{WQpoly},
see Section~\ref{sec:tables}.
The package was originally developed for the purposes of feasibility checking
for intersection arrays of distance-regular graphs
and included a routine to find general solutions to the system of equations
for computing triple intersection numbers.

For the purposes of the current research,
the package has been extended to support
parameters of general association schemes, in particular,
given as Krein arrays of $Q$-poly\-no\-mi\-al association schemes.
Additionally, the package now supports generating integral solutions
for systems of equations with constraints on the solutions
(e.g., nonnegativity of triple intersection numbers)
-- these can also be added on-the-fly.
The routine uses SageMath's mixed integer linear programming facilities,
which support multiple solvers.
We have used SageMath's default GLPK solver~\cite{GLPK}
and the CBC solver~\cite{CBC} in our computations
-- however, other solvers can also be used if they are available.

We have thus been able to implement an algorithm
which tries to narrow down the possible solutions of the systems of equations
for determining triple intersection numbers of an association scheme
such that they satisfy Corollary~\ref{cor:dblcnt},
and conclude inequality if any of the systems of equations
has no such feasible solutions.

\begin{enumerate}
\item
For each triple of relations $(R_r, R_s, R_t)$ such that $p^t_{rs} > 0$,
initialize an empty set of solutions,
obtain a general (i.e., parametric) solution to the system of equations
derived from \eqref{eqn:triple} and Theorem~\ref{thm:krein0},
and initialize a generator of solutions
with the constraint that the intersection numbers be integral and nonnegative.
All generators $(r, s, t)$ are initially marked as \pty{active},
and all triple intersection numbers $(r, s, t; i, j, k)$
(representing $\sV{x & y & z}{i & j & k}$
with $(x, y) \in R_r$, $(x, z) \in R_s$ and $(y, z) \in R_t$)
are initially marked as \pty{unknown}.

\item \label{item:next}
For each \pty{active} generator, generate one solution
and add it to the corresponding set of solutions.
If a generator does not return a new solution
(i.e., it has exhausted all of them),
then mark it as \pty{inactive}.

\item
For each \pty{inactive} generator,
verify that the corresponding set of solutions is non\-empty
-- otherwise, terminate and conclude nonexistence.

\item Initialize an empty set $Z$.

\item For each \pty{unknown} triple intersection number $(r, s, t; i, j, k)$,
mark it as \pty{nonzero} if a solution has been found
in which its value is not zero.
If such a solution has not been found yet,
make a copy of the generator $(r, s, t)$
with the constraint that $(r, s, t; i, j, k)$ be nonzero,
and generate one solution.
If such a solution exists,
add it to the set of solutions
and mark $(r, s, t; i, j, k)$ as \pty{nonzero},
otherwise mark $(r, s, t; i, j, k)$ as \pty{zero} and add it to $Z$.

\item If $Z$ is empty, terminate without concluding nonexistence.

\item For each triple intersection number $(r, s, t; i, j, k) \in Z$
and for each \pty{nonzero} $(a, b, c;$ $d, e, f) \in
\{(r, i, j; s, t, k), (s, i, k; r, t, j), (t, j, k; r, s, i)\}$,
remove all solutions from the corresponding set
in which the value of the latter is nonzero,
mark $(a, b, c; d, e, f)$ as \pty{zero},
mark all \pty{nonzero} $(a, b, c; \ell, m, n)$
with $(\ell, m, n) \ne (d, e, f)$ as \pty{unknown},
and add a constraint that $(a, b, c;$ $d, e, f)$ be zero
to the generator $(a, b, c)$ if it is \pty{active}.

\item Go to \ref{item:next}.
\end{enumerate}

Note that generators and triple intersection numbers
are considered equivalent under permutation of vertices,
i.e., under actions $(r, s, t) \mapsto (r, s, t)^\pi$
and $(r, s, t; i, j, k) \mapsto ((r, s, t)^\pi; (i, j, k)^{\pi^{(1\ 3)}})$
for $\pi \in S_3$.

The above algorithm is available as the \pythoninline{check_quadruples} method
of {\tt sage-drg}'s \pythoninline{ASParameters} class.
We ran it for all open cases in the tables from Section~\ref{sec:tables},
and obtained $29$ nonexistence results for primitive $3$-class schemes,
$92$ nonexistence results for $Q$-bipartite $4$-class schemes,
and $11$ nonexistence results for $Q$-bipartite $5$-class schemes.
The results are summarized in the following theorem.

\afterpage{
\begin{table}[t]
\begin{centering}
\begin{footnotesize}
\begin{tabular}{ccccc}
Label & Krein array & DRG & Nonexistence & Family \\
\hline
\willif{91,12} &
$\{12, {338 \over 35}, {39 \over 25}; 1, {312 \over 175}, {39 \over 5}\}$
&& $(3, 3, 3)$ & \\
\willif{225,24} & $\{24, 20, {36 \over 11}; 1, {30 \over 11}, 24\}$
&& $(3, 1, 1; 3, 3, 1)$ & \\
\willif{324,17} & $\{17, 16, 10; 1, 2, 8\}$
&& $(1, 1, 2)$ & \eqref{eqn:fam3} \\
\willif{324,19} & $\{19, {128 \over 9}, 10; 1, {16 \over 9}, 10\}$
&& $(1, 1, 3)$ & \\
\willif{441,20} & $\{20, {378 \over 25}, 12; 1, {42 \over 25}, 9\}$
&& $(1, 1, 3)$ & \\
\willif{540,33} & $\{33, 20, {63 \over 5}; 1, {12 \over 5}, 15\}$
&& $(1, 1, 3)$ & \\
\willif{540,35} &
$\{35, {243 \over 10}, {27 \over 2}; 1, {27 \over 10}, {45 \over 2}\}$
&& $(1, 1, 3)$ & \\
\willif{576,23} & $\{23, {432 \over 25}, 15; 1, {48 \over 25}, 9\}$
&& $(1, 1, 3)$ & \\
\willif{729,26} & $\{26, {486 \over 25}, 18; 1, {54 \over 25}, 9\}$
&& $(1, 1, 3)$ & \\
\willif{1000,37} & $\{37, 24, 14; 1, 2, 12\}$ && $(1, 1, 3)$ & \\
\willif{1015,28} &
$\{28, {2523 \over 130}, {4263 \over 338};
1, {1218 \over 845}, {203 \over 26}\}$
&& $(1, 1, 3)$ & \\
\willif{1080,83} & $\{83, 54, 21; 1, 6, 63\}$ & FSD & $(1, 1, 2)$ & \\
\willif{1134,49} &
$\{49, 48, {644 \over 75}; 1, {196 \over 75}, 42\}$ && $(1, 1, 1)$ & \\
\willif{1189,40} &
$\{40, {5043 \over 203}, {123 \over 7}; 1, {615 \over 406}, {164 \over 7}\}$
&& $(1, 1, 2)$ & \\
\willif{1470,104} & $\{104, 70, 25; 1, 7, 80\}$
& FSD & $(1, 1, 2; 3, 2, 3)$ & \\
\willif{1548,35} &
$\{35, {2187 \over 86}, {45 \over 2}; 1, {135 \over 86}, {27 \over 2}\}$
&& $(1, 1, 3)$ & \\
\willif[a]{1680,69} & $\{69, 42, 7; 1, 2, 63\}$ && $(1, 1, 2)$ & \\
\willif{1702,45} &
$\{45, {4761 \over 148}, {115 \over 4}; 1, {345 \over 148}, {69 \over 4}\}$
&& $(1, 1, 2)$ & \\
\willif{1944,29} & $\{29, 22, 25; 1, 2, 5\}$ && $(1, 1, 2)$ & \\
\willif{2016,195} & $\{195, 160, 28; 1, 20, 168\}$ & FSD & $(1, 2, 2)$ & \\
\willif{2106,65} & $\{65, 64, {676 \over 25}; 1, {104 \over 25}, 26\}$
& \{125, 108, 24; 1, 9, 75\} & $(1, 1, 1)$ & \\
\willif{2185,114} &
$\{114, {4761 \over 65}, {58121 \over 1521};
1, {11799 \over 1690}, {6118 \over 117}\}$
&& $(1, 1, 3)$ & \\
\willif{2197,36} &
$\{36, {45 \over 2}, {45 \over 2}; 1, {3 \over 2}, {15 \over 2}\}$
&& $(1, 1, 3)$ & \\
\willif{2197,126} & $\{126, 90, 10; 1, 6, 105\}$
& FSD (0231) & $(2, 2, 3)$ & \\
\willif{2304,47} & $\{47, {135 \over 4}, 33; 1, {9 \over 4}, 15\}$
&& $(1, 1, 3)$ & \\
\willif{2376,95} & $\{95, 63, 12; 1, 3, 84\}$ && $(1, 1, 3)$ & \\
\willif{2401,48} & $\{48, 30, 29; 1, {3 \over 2}, 20\}$ && $(1, 1, 2)$ & \\
\willif[a]{2500,49} & $\{49, 48, 26; 1, 2, 24\}$
&& $(1, 1, 2)$ & \eqref{eqn:fam3} \\
\willif{2640,203} & $\{203, 160, 34; 1, 16, 170\}$ & FSD & $(1, 2, 2)$ & \\
\end{tabular}
\caption{
Nonexistence results for feasible Krein arrays
of primitive $3$-class $Q$-polynomial association schemes
on up to $2800$ vertices.
For $P$-polynomial parameters
(for the natural ordering of relations, unless otherwise indicated),
the DRG column indicates whether the parameters are formally self-dual (FSD),
or the intersection array is given.
The Nonexistence column gives either the triple of relation indices
for which there is no solution for triple intersection numbers,
or the $6$-tuple of relation indices $(r, s, t; i, j, k)$
for which Corollary~\ref{cor:dblcnt} is not satisfied.
The Family column specifies the infinite family
from Subsection~\ref{ssec:families}
that the parameter set is part of.
}
\label{tab:d3prim}
\end{footnotesize}
\end{centering}
\end{table}
\clearpage
}

\afterpage{\aem 
\begin{center}
\begin{footnotesize}
\begin{tabular}[p]{cccc}
Label & Krein array & Nonexistence & Family \\
\hline
\willif{200,12} &
$\{12, 11, {256 \over 25}, {36 \over 11};
1, {44 \over 25}, {96 \over 11}, 12\}$
& $(1, 1, 2)$ & \\
\willif{462,21} &
$\{21, 20, {196 \over 11}, {49 \over 5};
1, {35 \over 11}, {56 \over 5}, 21\}$
& $(1, 1, 2)$ & \\
\willif{486,45} & $\{45, 44, 36, 5; 1, 9, 40, 45\}$
& $(1, 1, 2)$ & \\
\willif{578,17} & $\{17, 16, {136 \over 9}, 9; 1, {17 \over 9}, 8, 17\}$
& $(1, 1, 2)$ & \eqref{eqn:fam4} \\
\willif{686,28} & $\{28, 27, 25, 8; 1, 3, 20, 28\}$
& $(1, 2, 2)$ & \\
\willif{702,36} &
$\{36, 35, {405 \over 13}, {72 \over 7};
1, {63 \over 13}, {180 \over 7}, 36\}$
& $(1, 2, 2)$ & \\
\willif{722,19} & $\{19, 18, {152 \over 9}, 11; 1, {19 \over 9}, 8, 19\}$
& $(1, 1, 2)$ & \eqref{eqn:fam4} \\
\willif{882,21} & $\{21, 20, {56 \over 3}, 13; 1, {7 \over 3}, 8, 21\}$
& $(1, 1, 2)$ & \eqref{eqn:fam4} \\
\willif{990,66} &
$\{66, 65, {847 \over 15}, {88 \over 13};
1, {143 \over 15}, {770 \over 13}, 66\}$
& $(1, 2, 2)$ & \\
\willif{1014,78} & $\{78, 77, 65, 8; 1, 13, 70, 78\}$
& $(1, 2, 2)$ & \\
\willif{1058,23} & $\{23, 22, {184 \over 9}, 15; 1, {23 \over 9}, 8, 23\}$
& $(1, 1, 2)$ & \eqref{eqn:fam4} \\
\willif{1250,25} & $\{25, 24, {200 \over 9}, 17; 1, {25 \over 9}, 8, 25\}$
& $(1, 1, 2)$ & \eqref{eqn:fam4} \\
\willif{1458,27} & $\{27, 26, 24, 19; 1, 3, 8, 27\}$
& $(1, 1, 2)$ & \eqref{eqn:fam4} \\
\willif{1458,36} & $\{36, 35, 33, 16; 1, 3, 20, 36\}$
& $(1, 2, 2)$ & \\
\willif{1482,38} &
$\{38, 37, {12635 \over 351}, {76 \over 37};
1, {703 \over 351}, {1330 \over 37}, 38\}$
& $(1, 2, 2)$ & \\
\willif{1674,45} &
$\{45, 44, {1296 \over 31}, {135 \over 11};
1, {99 \over 31}, {360 \over 11}, 45\}$
& $(1, 1, 2)$ & \\
\willif{1682,29} & $\{29, 28, {232 \over 9}, 21; 1, {29 \over 9}, 8, 29\}$
& $(1, 1, 2)$ & \eqref{eqn:fam4} \\
\willif{1694,55} & $\{55, 54, {352 \over 7}, 15; 1, {33 \over 7}, 40, 55\}$
& $(1, 1, 2)$ & \\
\willif{1862,21} &
$\{21, 20, {364 \over 19}, {81 \over 5};
1, {35 \over 19}, {24 \over 5}, 21\}$
& $(1, 1, 2)$ & \\
\willif{2058,49} &
$\{49, 48, {686 \over 15}, {77 \over 5};
1, {49 \over 15}, {168 \over 5}, 49\}$
& $(1, 1, 2)$ & \\
\willif{2060,50} &
$\{50, 49, {4800 \over 103}, {110 \over 7};
1, {350 \over 103}, {240 \over 7}, 50\}$
& $(1, 1, 2)$ & \\
\willif{2394,27} &
$\{27, 26, {3240 \over 133}, {279 \over 13};
1, {351 \over 133}, {72 \over 13}, 27\}$
& $(1, 1, 2)$ & \\
\willif{2466,36} &
$\{36, 35, {4617 \over 137}, {144 \over 7};
1, {315 \over 137}, {108 \over 7}, 36\}$
& $(1, 2, 2)$ & \\
\willif{2550,85} &
$\{85, 84, {1156 \over 15}, {187 \over 7};
1, {119 \over 15}, {408 \over 7}, 85\}$
& $(1, 1, 2)$ & \\
\willif{2662,121} &
$\{121, 120, {5324 \over 49}, {77 \over 5};
1, {605 \over 49}, {528 \over 5}, 121\}$
& $(1, 1, 2)$ & \\
\willif{2706,66} &
$\{66, 65, {2541 \over 41}, {44 \over 3};
1, {165 \over 41}, {154 \over 3}, 66\}$
& $(1, 2, 2)$ & \\
\willif{2730,78} &
$\{78, 77, {507 \over 7}, {52 \over 3};
1, {39 \over 7}, {182 \over 3}, 78\}$
& $(1, 2, 2)$ & \\
\willif{2750,25} &
$\{25, 24, {250 \over 11}, {185 \over 9};
1, {25 \over 11}, {40 \over 9}, 25\}$
& $(1, 1, 2)$ & \\
\willif{2862,53} &
$\{53, 52, {11236 \over 225}, {265 \over 13};
1, {689 \over 225}, {424 \over 13}, 53\}$
& $(1, 1, 2)$ & \\
\willif{2890,153} & $\{153, 152, 136, 9; 1, 17, 144, 153\}$
& $(1, 1, 2)$ & \\
\willif{2926,171} &
$\{171, 170, {11552 \over 77}, {171 \over 17};
1, {1615 \over 77}, {2736 \over 17}, 171\}$
& $(1, 1, 2)$ & \\
\willif{2970,54} & $\{54, 53, {567 \over 11}, 12; 1, {27 \over 11}, 42, 54\}$
& $(1, 2, 2)$ & \\
\willif{3042,65} & $\{65, 64, {182 \over 3}, 25; 1, {13 \over 3}, 40, 65\}$
& $(1, 1, 2)$ & \\
\willif{3074,106} &
$\{106, 105, {2809 \over 29}, {212 \over 9};
1, {265 \over 29}, {742 \over 9}, 106\}$
& $(1, 2, 2)$ & \\
\willif{3174,184} & $\{184, 183, 161, 16; 1, 23, 168, 184\}$
& $(1, 2, 2)$ & \\
\willif{3250,50} &
$\{50, 49, {625 \over 13}, {100 \over 9};
1, {25 \over 13}, {350 \over 9}, 50\}$
& $(1, 2, 2)$ & \\
\willif{3402,126} &
$\{126, 125, {343 \over 3}, 28; 1, {35 \over 3}, 98, 126\}$
& $(1, 2, 2)$ & \\
\willif{3498,77} &
$\{77, 76, {3872 \over 53}, {231 \over 19};
1, {209 \over 53}, {1232 \over 19}, 77\}$
& $(1, 1, 2)$ & \\
\willif{3610,133} &
$\{133, 132, {608 \over 5}, 21; 1, {57 \over 5}, 112, 133\}$
& $(1, 1, 2)$ & \\
\willif{3726,36} & $\{36, 35, {783 \over 23}, 24; 1, {45 \over 23}, 12, 36\}$
& $(1, 2, 2)$ & \\
\willif{4070,55} &
$\{55, 54, {1936 \over 37}, {77 \over 3};
1, {99 \over 37}, {88 \over 3}, 55\}$
& $(1, 1, 2)$ & \\
\willif{4250,119} &
$\{119, 118, {13872 \over 125}, {1309 \over 59};
1, {1003 \over 125}, {5712 \over 59}, 119\}$
& $(1, 1, 2)$ & \\
\willif{4370,190} &
$\{190, 189, {3971 \over 23}, {76 \over 7};
1, {399 \over 23}, {1254 \over 7}, 190\}$
& $(1, 2, 2)$ & \\
\willif{4410,210} & $\{210, 209, 189, 12; 1, 21, 198, 210\}$
& $(1, 2, 2)$ & \\
\willif{4464,24} &
$\{24, 23, {2048 \over 93}, {488 \over 23};
1, {184 \over 93}, {64 \over 23}, 24\}$
& $(1, 1, 2)$ & \\
\willif{4526,73} &
$\{73, 72, {10658 \over 155}, {511 \over 15};
1, {657 \over 155}, {584 \over 15}, 73\}$
& $(1, 1, 2)$ & \\
\willif{4558,86} &
$\{86, 85, {12943 \over 159}, {1376 \over 51};
1, {731 \over 159}, {3010 \over 51}, 86\}$
& $(1, 2, 2)$ & \\
\willif{4590,75} &
$\{75, 74, {1200 \over 17}, 35; 1, {75 \over 17}, 40, 75\}$
& $(1, 1, 2)$ & \\
\multicolumn{4}{r}{{\footnotesize\em (Continued on next page.)}} \\
\end{tabular}
\end{footnotesize}
\end{center}

\clearpage
\begin{table}[t]
\begin{centering}
\begin{footnotesize}
\begin{tabular}{cccc}
\multicolumn{4}{l}{{\footnotesize\em (Continued.)}} \\
Label & Krein array & Nonexistence & Family \\
\hline
\willif{4758,117} &
$\{117, 116, {6760 \over 61}, {273 \over 29};
1, {377 \over 61}, {3120 \over 29}, 117\}$
& $(1, 1, 2)$ & \\
\willif{4802,49} &
$\{49, 48, {1176 \over 25}, 25; 1, {49 \over 25}, 24, 49\}$
& $(1, 1, 2)$ & \eqref{eqn:fam4} \\
\willif{5046,261} & $\{261, 260, 232, 21; 1, 29, 240, 261\}$
& $(1, 1, 2)$ & \\
\willif{5202,51} &
$\{51, 50, {1224 \over 25}, 27; 1, {51 \over 25}, 24, 51\}$
& $(1, 1, 2)$ & \eqref{eqn:fam4} \\
\willif{5480,100} &
$\{100, 99, {12800 \over 137}, {140 \over 3};
1, {900 \over 137}, {160 \over 3}, 100\}$
& $(1, 1, 2)$ & \\
\willif{5566,66} &
$\{66, 65, {1463 \over 23}, 24; 1, {55 \over 23}, 42, 66\}$
& $(1, 2, 2)$ & \\
\willif{5590,78} &
$\{78, 77, {3211 \over 43}, {312 \over 11};
1, {143 \over 43}, {546 \over 11}, 78\}$
& $(1, 2, 2)$ & \\
\willif{5618,53} &
$\{53, 52, {1272 \over 25}, 29; 1, {53 \over 25}, 24, 53\}$
& $(1, 1, 2)$ & \eqref{eqn:fam4} \\
\willif{5618,106} &
$\{106, 105, {901 \over 9}, 36; 1, {53 \over 9}, 70, 106\}$
& $(1, 2, 2)$ & \\
\willif{5642,91} &
$\{91, 90, {2704 \over 31}, {65 \over 3};
1, {117 \over 31}, {208 \over 3}, 91\}$
& $(1, 1, 2)$ & \\
\willif{5670,105} & $\{105, 104, 98, 49; 1, 7, 56, 105\}$
& $(1, 1, 2)$ & \\
\willif[a]{5670,105} & $\{105, 104, 100, 25; 1, 5, 80, 105\}$
& $(1, 1, 2)$ & \\
\willif{6050,55} & $\{55, 54, {264 \over 5}, 31; 1, {11 \over 5}, 24, 55\}$
& $(1, 1, 2)$ & \eqref{eqn:fam4} \\
\willif{6278,73} &
$\{73, 72, {21316 \over 301}, {365 \over 21};
1, {657 \over 301}, {1168 \over 21}, 73\}$
& $(1, 1, 2)$ & \\
\willif{6358,85} & $\{85, 84, {884 \over 11}, 45; 1, {51 \over 11}, 40, 85\}$
& $(1, 1, 2)$ & \\
\willif{6422,91} &
$\{91, 90, {1664 \over 19}, {119 \over 5};
1, {65 \over 19}, {336 \over 5}, 91\}$
& $(1, 1, 2)$ & \\
\willif{6426,147} &
$\{147, 146, {2352 \over 17}, 35; 1, {147 \over 17}, 112, 147\}$
& $(1, 1, 2)$ & \\
\willif{6450,105} &
$\{105, 104, {4320 \over 43}, {357 \over 13};
1, {195 \over 43}, {1008 \over 13}, 105\}$
& $(1, 1, 2)$ & \\
\willif{6498,57} &
$\{57, 56, {1368 \over 25}, 33; 1, {57 \over 25}, 24, 57\}$
& $(1, 1, 2)$ & \eqref{eqn:fam4} \\
\willif{6962,59} &
$\{59, 58, {1416 \over 25}, 35; 1, {59 \over 25}, 24, 59\}$
& $(1, 1, 2)$ & \eqref{eqn:fam4} \\
\willif{7210,103} &
$\{103, 102, {84872 \over 875}, {927 \over 17};
1, {5253 \over 875}, {824 \over 17}, 103\}$
& $(1, 1, 2)$ & \\
\willif{7442,61} &
$\{61, 60, {1464 \over 25}, 37; 1, {61 \over 25}, 24, 61\}$
& $(1, 1, 2)$ & \eqref{eqn:fam4} \\
\willif{7854,66} &
$\{66, 65, {1089 \over 17}, {88 \over 3};
1, {33 \over 17}, {110 \over 3}, 66\}$
& $(1, 2, 2)$ & \\
\willif{7878,78} &
$\{78, 77, {7605 \over 101}, {104 \over 3};
1, {273 \over 101}, {130 \over 3}, 78\}$
& $(1, 2, 2)$ & \\
\willif{7906,134} &
$\{134, 133, {22445 \over 177}, {2948 \over 57};
1, {1273 \over 177}, {4690 \over 57}, 134\}$
& $(1, 2, 2)$ & \\
\willif{7938,63} &
$\{63, 62, {1512 \over 25}, 39; 1, {63 \over 25}, 24, 63\}$
& $(1, 1, 2)$ & \eqref{eqn:fam4} \\
\willif{8120,100} &
$\{100, 99, {19200 \over 203}, {620 \over 11};
1, {1100 \over 203}, {480 \over 11}, 100\}$
& $(1, 1, 2)$ & \\
\willif{8190,90} &
$\{90, 89, {1125 \over 13}, 40; 1, {45 \over 13}, 50, 90\}$
& $(1, 2, 2)$ & \\
\willif{8246,217} &
$\{217, 216, {3844 \over 19}, {155 \over 3};
1, {279 \over 19}, {496 \over 3}, 217\}$
& $(1, 1, 2)$ & \\
\willif{8450,65} & $\{65, 64, {312 \over 5}, 41; 1, {13 \over 5}, 24, 65\}$
& $(1, 1, 2)$ & \eqref{eqn:fam4} \\
\willif{8450,78} & $\{78, 77, {377 \over 5}, 36; 1, {13 \over 5}, 42, 78\}$
& $(1, 2, 2)$ & \\
\willif{8470,88} & $\{88, 87, {429 \over 5}, 16; 1, {11 \over 5}, 72, 88\}$
& $(1, 2, 2)$ & \\
\willif{8478,27} &
$\{27, 26, {3888 \over 157}, {327 \over 13};
1, {351 \over 157}, {24 \over 13}, 27\}$
& $(1, 1, 2)$ & \\
\willif{8750,325} & $\{325, 324, 300, 13; 1, 25, 312, 325\}$
& $(1, 1, 2)$ & \\
\willif{8758,232} &
$\{232, 231, {32799 \over 151}, {464 \over 11};
1, {2233 \over 151}, {2088 \over 11}, 232\}$
& $(1, 2, 2)$ & \\
\willif{8798,106} &
$\{106, 105, {8427 \over 83}, {424 \over 9};
1, {371 \over 83}, {530 \over 9}, 106\}$
& $(1, 2, 2)$ & \\
\willif{8802,351} &
$\{351, 350, {52488 \over 163}, {351 \over 25};
1, {4725 \over 163}, {8424 \over 25}, 351\}$
& $(1, 1, 2)$ & \\
\willif{8978,67} &
$\{67, 66, {1608 \over 25}, 43; 1, {67 \over 25}, 24, 67\}$
& $(1, 1, 2)$ & \eqref{eqn:fam4} \\
\willif{9310,105} &
$\{105, 104, {17500 \over 171}, {165 \over 13};
1, {455 \over 171}, {1200 \over 13}, 105\}$
& $(1, 1, 2)$ & \\
\willif{9350,153} &
$\{153, 152, {8092 \over 55}, {459 \over 19};
1, {323 \over 55}, {2448 \over 19}, 153\}$
& $(1, 1, 2)$ & \\
\willif{9386,171} &
$\{171, 170, {2128 \over 13}, 27; 1, {95 \over 13}, 144, 171\}$
& $(1, 1, 2)$ & \\
\willif{9522,69} &
$\{69, 68, {1656 \over 25}, 45; 1, {69 \over 25}, 24, 69\}$
& $(1, 1, 2)$ & \eqref{eqn:fam4} \\
\willif{9522,161} &
$\{161, 160, {460 \over 3}, 49; 1, {23 \over 3}, 112, 161\}$
& $(1, 1, 2)$ & \\
\willif{9702,126} &
$\{126, 125, {1323 \over 11}, 56; 1, {63 \over 11}, 70, 126\}$
& $(1, 2, 2)$ & \\
\end{tabular}
\caption{
Nonexistence results for feasible Krein arrays
of $Q$-bipartite (but not $Q$-antipodal)
$4$-class $Q$-polynomial association schemes
on up to $10000$ vertices.
The Nonexistence column gives either the triple of relation indices
for which there is no solution for triple intersection numbers.
The Family column specifies the infinite family
from Subsection~\ref{ssec:families}
that the parameter set is part of.
}
\label{tab:d4qbip}
\end{footnotesize}
\end{centering}
\end{table}
\clearpage
}

\afterpage{
\begin{table}[t]
\begin{centering}
\begin{footnotesize}
\begin{tabular}{cccc}
Label & Krein array & Family \\
\hline
\willif{576,21} &
$\{21, 20, 18, {21 \over 2}, {27 \over 7};
1, 3, {21 \over 2}, {120 \over 7}, 21\}$ & \\
\willif{800,25} &
$\{25, 24, {625 \over 28}, {75 \over 7}, {25 \over 7};
1, {75 \over 28}, {100 \over 7}, {150 \over 7}, 25\}$
& \eqref{eqn:fam5} \\
\willif{2000,25} &
$\{25, 24, {625 \over 27}, {50 \over 3}, {25 \over 9};
1, {50 \over 27}, {25 \over 3}, {200 \over 9}, 25\}$ & \\
\willif{2400,22} &
$\{22, 21, 20, {88 \over 5}, {32 \over 11};
1, 2, {22 \over 5}, {210 \over 11}, 22\}$ & \\
\willif{2928,61} &
$\{61, 60, {3721 \over 66}, {305 \over 11}, {61 \over 11};
1, {305 \over 66}, {366 \over 11}, {610 \over 11}, 61\}$
& \eqref{eqn:fam5} \\
\willif{7232,113} &
$\{113, 112, {12769 \over 120}, {791 \over 15}, {113 \over 15};
1, {791 \over 120}, {904 \over 15}, {1582 \over 15}, 113\}$
& \eqref{eqn:fam5} \\
\willif{14480,181} &
$\{181, 180, {32761 \over 190}, {1629 \over 19}, {181 \over 19};
1, {1629 \over 190}, {1810 \over 19}, {3258 \over 19}, 181\}$
& \eqref{eqn:fam5} \\
\willif{25440,265} &
$\{265, 264, {70225 \over 276}, {2915 \over 23}, {265 \over 23};
1, {2915 \over 276}, {3180 \over 23}, {5830 \over 23}, 265\}$
& \eqref{eqn:fam5} \\
\willif{37752,121} &
$\{121, 120, {14641 \over 125}, {484 \over 5}, {121 \over 25};
1, {484 \over 125}, {121 \over 5}, {2904 \over 25}, 121\}$ & \\
\willif{40880,365} &
$\{365, 364, {133225 \over 378}, {4745 \over 27}, {365 \over 27};
1, {4745 \over 378}, {5110 \over 27}, {9490 \over 27}, 365\}$
& \eqref{eqn:fam5} \\
\willif{47040,116} &
$\{116, 115, 112, {696 \over 7}, {144 \over 29};
1, 4, {116 \over 7}, {3220 \over 29}, 116\}$ & \\
\end{tabular}
\caption{
Nonexistence results for feasible Krein arrays
of $Q$-bipartite (but not $Q$-antipodal)
$5$-class $Q$-polynomial association schemes
on up to $50000$ vertices.
In all cases,
there is no solution for triple intersection numbers
for a triple of vertices mutually in relation $R_1$.
The Family column specifies the infinite family
from Subsection~\ref{ssec:families}
that the parameter set is part of.
}
\label{tab:d5qbip}
\end{footnotesize}
\end{centering}
\end{table}
}

\begin{theorem}\label{thm:nonex}
A $Q$-polynomial association scheme with Krein array listed
in one of Tables~\ref{tab:d3prim}, \ref{tab:d4qbip} and~\ref{tab:d5qbip}
does not exist.
\end{theorem}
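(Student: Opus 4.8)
The plan is to prove the theorem one parameter set at a time, in each case analysing triple intersection numbers together with Theorem~\ref{thm:krein0} and Corollary~\ref{cor:dblcnt}, exactly as carried out by the \pythoninline{check_quadruples} routine of Subsection~\ref{ssec:comp}. For a given Krein array, the first step is to reconstruct all parameters of the putative scheme: the Krein array determines $Q$, hence $P$ and the valencies and multiplicities, and then the intersection numbers $p_{ij}^k$ and Krein parameters $q_{ij}^k$ follow from the identities recalled in Section~\ref{sec:tables}. If such a scheme exists, then for every triple of vertices $(x,y,z)$ with $(x,y)\in R_r$, $(x,z)\in R_s$, $(y,z)\in R_t$ the numbers $\sV{x & y & z}{i & j & k}$ are well-defined nonnegative integers satisfying the linear system \eqref{eqn:triple}, augmented by one equation $\sum_{i,j,k} Q_{ir}Q_{js}Q_{kt}\sV{x & y & z}{i & j & k}=0$ for each Krein parameter with $q_{rs}^t=0$ (Theorem~\ref{thm:krein0}).

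For the entries whose Nonexistence column is a single triple $(r,s,t)$, I would show that this system together with the constraints $\sV{x & y & z}{i & j & k}\ge 0$ has no nonnegative integral solution (in almost all cases already no nonnegative rational solution). Infeasibility is certified by a Farkas-type nonnegative combination of the equations and inequalities yielding a contradiction, and is produced in practice by mixed-integer linear programming over $\mathbb{Q}$, or over the quadratic splitting field in the irrational-eigenvalue cases. Since $p^t_{rs}>0$ guarantees that a triple of vertices in the prescribed relations exists whenever the scheme does, infeasibility of the system rules out the scheme.

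For the entries whose Nonexistence column is a $6$-tuple $(r,s,t;i,j,k)$, the argument has two stages and uses double counting via Corollary~\ref{cor:dblcnt}. First, show that every nonnegative integral solution of the $(r,s,t)$ system has its $(i,j,k)$-coordinate equal to $0$, so $\sV{x & y & z}{i & j & k}=0$ for all $(x,y,z)$ with $(x,y)\in R_r$, $(x,z)\in R_s$, $(y,z)\in R_t$; by Corollary~\ref{cor:dblcnt} this forces $\sV{w & x & y}{k & s & t}=0$ for all $w,x,y$ with $(w,x)\in R_i$, $(w,y)\in R_j$, $(x,y)\in R_r$. Second, show that the system \eqref{eqn:triple} for the triple of base relations $(i,j,r)$ (i.e., $(w,x)\in R_i$, $(w,y)\in R_j$, $(x,y)\in R_r$), with the equations from Theorem~\ref{thm:krein0} adjoined, forces $\sV{w & x & y}{k & s & t}$ to be strictly positive in every nonnegative integral solution — a contradiction. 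In some cases the first stage only becomes available after iterating this zero-propagation across several triples of relations; the process terminates because the set of triple intersection numbers is finite.

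The main obstacle is not conceptual but one of scale and of reliable exact computation. There are $132$ parameter sets; for each, the systems have up to $D^3$ unknowns ($64$ when $D=4$, $125$ when $D=5$) and must be examined over all admissible triples $(R_r,R_s,R_t)$, with the zero-propagation loop possibly revisiting several of them. To avoid spurious (in)feasibility one must work in exact arithmetic over $\mathbb{Q}$ or the quadratic splitting field rather than with floating point, and must correctly enumerate the vanishing Krein parameters and the $S_3$-symmetries identifying equivalent triples. By contrast, checking any single resulting certificate — an explicit Farkas combination, or an explicit instance of Corollary~\ref{cor:dblcnt} together with the infeasible refined system it produces — is routine, so the theorem reduces to a finite, machine-verifiable computation, whose outcome is recorded in Tables~\ref{tab:d3prim}, \ref{tab:d4qbip} and~\ref{tab:d5qbip}.
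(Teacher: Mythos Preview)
Your proposal is correct and follows essentially the same approach as the paper: for all but two entries the system \eqref{eqn:triple} augmented by the equations from Theorem~\ref{thm:krein0} is shown to have no nonnegative integral solution for the indicated triple $(r,s,t)$, and for the two $6$-tuple entries (\willif{225,24} and \willif{1470,104}) the paper carries out exactly the two-stage double-counting argument via Corollary~\ref{cor:dblcnt} that you describe. One small notational slip: in your sentence ``one equation $\sum_{i,j,k} Q_{ir}Q_{js}Q_{kt}\sV{x & y & z}{i & j & k}=0$ for each Krein parameter with $q_{rs}^t=0$'' you reuse $r,s,t$ both for the base relations of the triple $(x,y,z)$ and for the indices of the vanishing Krein parameter; these are independent, so use distinct letters there.
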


\begin{proof}
In all but two cases,
it suffices to observe that for some triple of relations $R_r, R_s, R_t$,
the system of equations
derived from \eqref{eqn:triple} and Theorem~\ref{thm:krein0}
has no integral nonnegative solutions
-- Tables~\ref{tab:d3prim} and~\ref{tab:d4qbip} list the triple $(r, s, t)$,
while for all examples in Table~\ref{tab:d5qbip},
this is true for $(r, s, t) = (1, 1, 1)$.
Note that the natural ordering of the relations is used.

Let us now consider the cases
\willif{225,24} and \willif{1470,104} from Table~\ref{tab:d3prim}.
In the first case, the Krein array is $\{24, 20, 36/11; 1, 30/11, 24\}$.
Such an association scheme has two $Q$-polynomial orderings,
so we can augment the system of equations \eqref{eqn:triple}
with six equations derived from Theorem~\ref{thm:krein0}.
Let $w, x, y, z$ be vertices such that
$(x, z), (y, z) \in R_1$ and $(w, x), (w, y), (x, y) \in R_3$.
Since $p^3_{11} = 22$ and $p^3_{33} = 3$, such vertices must exist.
We first compute the triple intersection numbers with respect to $x, y, z$.
There are two integral nonnegative solutions,
both having $[3\ 3\ 1] = 0$.
On the other hand, there is a single solution
for the triple intersection numbers with respect to $w, x, y$,
giving $[1\ 1\ 1] = 3$.
However, this contradicts Corollary~\ref{cor:dblcnt},
so such an association scheme does not exist.

In the second case, the Krein array is $\{104, 70, 25; 1, 7, 80\}$.
Let $w, x, y, z$ be vertices such that $(x, y), (x, z) \in R_1$,
$(w, y), (y, z) \in R_2$ and $(w, x) \in R_3$.
Since $p^1_{12} = 70$ and $p^1_{32} = 250$, such vertices must exist.
There is a single solution
for the triple intersection numbers with respect to $x, y, z$,
giving $[3\ 2\ 3] = 0$.
On the other hand, there are four solutions
for the triple intersection numbers with respect to $w, x, y$,
from which we obtain $[3\ 1\ 2] \in \{15, 16, 17, 18\}$.
Again, this contradicts Corollary~\ref{cor:dblcnt},
so such an association scheme does not exist.
This completes the proof.
\end{proof}

\begin{remark}
The {\tt sage-drg} package repository provides two Jupyter notebooks
containing the computation details in the proofs of nonexistence
of two cases from Table~\ref{tab:d3prim}:
\begin{itemize}
\item
\href{https://nbviewer.jupyter.org/github/jaanos/sage-drg/blob/master/jupyter/QPoly-24-20-36_11-1-30_11-24.ipynb}{\tt QPoly-24-20-36\_11-1-30\_11-24.ipynb}
for the case \willif{225,24}, and
\item
\href{https://nbviewer.jupyter.org/github/jaanos/sage-drg/blob/master/jupyter/DRG-104-70-25-1-7-80.ipynb}{\tt DRG-104-70-25-1-7-80.ipynb}
for the case \willif{1470,104}.
\end{itemize}
\end{remark}

\begin{remark}
The parameter set \willif{91,12} from Table~\ref{tab:d3prim}
was listed by Van Dam~\cite{vD}
as the smallest feasible $Q$-polynomial parameter set
for which no scheme is known.
The next such open case is now
the Krein array $\{14, 108/11, 15/4; 1, 24/11, 45/4\}$
for a primitive $3$-class $Q$-polynomial association scheme
with $99$ vertices,
which was also listed by Van Dam.
\end{remark}

Since some of the parameters from Table~\ref{tab:d3prim}
also admit a $P$-polynomial ordering,
we can derive nonexistence of distance-regular graphs
with certain intersection arrays.
We have also found an intersection array
for a primitive $Q$-polynomial distance-regular graph of diameter $4$,
which is listed in~\cite{BCN} and~\cite{Bdrg},
and for which, to the best of our knowledge,
nonexistence has not been previously known.

\begin{theorem}\label{thm:nonex-drg}
There is no distance-regular graph with intersection array
\begin{align*}
\{83, 54, 21&; 1, 6, 63\}, \\
\{104, 70, 25&; 1, 7, 80\}, \\
\{195, 160, 28&; 1, 20, 168\}, \\
\{125, 108, 24&; 1, 9, 75\}, \\
\{126, 90, 10&; 1, 6, 105\}, \\
\{203, 160, 34&; 1, 16, 170\}, \text{or} \\
\{53, 40, 28, 16&; 1, 4, 10, 28\}.
\end{align*}
\end{theorem}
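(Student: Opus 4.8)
The plan is to reduce the six diameter-$3$ arrays to Theorem~\ref{thm:nonex} and to settle the diameter-$4$ array by a direct triple intersection number computation. A distance-regular graph with a prescribed intersection array, if it exists, determines a $P$-polynomial association scheme whose entire parameter set is fixed by the array --- in particular, whether the scheme admits a $Q$-polynomial ordering of eigenspaces and, if so, its Krein array. Hence, whenever one of the listed diameter-$3$ arrays belongs to a $Q$-polynomial scheme whose Krein array occurs in Table~\ref{tab:d3prim}, the nonexistence of that scheme established in Theorem~\ref{thm:nonex} immediately yields nonexistence of the graph; and the DRG column of Table~\ref{tab:d3prim} records exactly this correspondence, so the first step is simply to read it off.

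The matching goes as follows. The arrays $\{83,54,21;1,6,63\}$, $\{104,70,25;1,7,80\}$, $\{195,160,28;1,20,168\}$, $\{126,90,10;1,6,105\}$ and $\{203,160,34;1,16,170\}$ are the intersection arrays of the formally self-dual schemes in rows \willif{1080,83}, \willif{1470,104}, \willif{2016,195}, \willif{2197,126} and \willif{2640,203} of Table~\ref{tab:d3prim}; for a formally self-dual $P$- and $Q$-polynomial scheme the Krein array equals the intersection array (for row \willif{2197,126}, after the eigenspace reordering $(0,2,3,1)$ recorded there), so these five arrays are precisely the Krein arrays in those rows. The remaining array $\{125,108,24;1,9,75\}$ is the distance-regular graph on $2106$ vertices listed in the DRG column of row \willif{2106,65}, whose scheme is $Q$-polynomial with Krein array $\{65,64,\frac{676}{25};1,\frac{104}{25},26\}$. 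Applying Theorem~\ref{thm:nonex} to all six rows closes these cases.

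The last array, $\{53,40,28,16;1,4,10,28\}$, is the intersection array of a primitive distance-regular graph of diameter $4$ on $2916$ vertices; being primitive of diameter $4$, it is not covered by any of Tables~\ref{tab:d3prim}--\ref{tab:d5qbip}, so I would run the analysis of Section~\ref{sec:nonex} on it directly. First fix a $Q$-polynomial ordering of the eigenspaces and list the vanishing Krein parameters $q^t_{rs}$; then, for a well-chosen triple of relations $(R_r,R_s,R_t)$ with $p^t_{rs}>0$, assemble the Diophantine system \eqref{eqn:triple} together with the equations supplied by Theorem~\ref{thm:krein0} for the zero Krein parameters, and exhibit that it has no nonnegative integral solution; if no single triple suffices, fall back to the double-counting refinement of Corollary~\ref{cor:dblcnt}, linking two triples as was done for \willif{225,24} and \willif{1470,104}. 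The main obstacle is precisely this diameter-$4$ case: it lies outside the table-driven search, the linear system is considerably larger ($D^3=64$ triple intersection numbers plus many Krein-derived equations), verifying the $Q$-polynomial property and the exact list of zero Krein parameters of the graph is a prerequisite, and one must locate the right triple --- or the right pair of linked triples --- at which feasibility actually fails.
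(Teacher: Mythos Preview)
Your reduction of the six diameter-$3$ arrays to Theorem~\ref{thm:nonex} via Table~\ref{tab:d3prim} is exactly what the paper does; one small correction is that for \willif{2197,126} the table records a reordering of the \emph{relations} ($A_2,A_3,A_1$), not of the eigenspaces, but this does not affect the argument.

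For the diameter-$4$ array $\{53,40,28,16;1,4,10,28\}$ your proposal is only a plan, and the paper supplies precisely the pieces you flag as obstacles. The scheme is formally self-dual for the natural ordering of eigenspaces, which settles the $Q$-polynomial question and gives the Krein array at once (equal to the intersection array); this yields twelve vanishing Krein parameters and hence twelve extra equations from Theorem~\ref{thm:krein0}. The winning triple is $(r,s,t)=(1,3,3)$: for vertices $x,y,z$ with $(x,y)\in R_1$, $(x,z),(y,z)\in R_3$ (such triples exist since $p^1_{33}>0$), the augmented system~\eqref{eqn:triple} has a two-parameter affine family of real solutions but no nonnegative \emph{integral} one, and this single triple already forces nonexistence --- no appeal to Corollary~\ref{cor:dblcnt} is needed. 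So your outline is the right one; what is missing is the identification of formal self-duality and of the specific triple $(1,3,3)$, together with the verification that the resulting system has no integral solution.
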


\begin{proof}
The cases \willif{1080,83}, \willif{1470,104}, \willif{2016,195}
and \willif{2640,203} from Table~\ref{tab:d3prim}
are formally self-dual for the natural ordering of relations,
while \willif{2197,126} is formally self-dual with ordering of relations
$A_2, A_3, A_1$ relative to the natural ordering.
In each case, the corresponding association scheme is $P$-polynomial
with intersection array equal to the Krein array.
The case \willif{2106,65} is not formally self-dual,
yet the natural ordering of relations is $P$-polynomial
with intersection array $\{125, 108, 24; 1, 9, 75\}$.
In all of the above cases,
Theorem~\ref{thm:nonex} implies nonexistence
of the corresponding association scheme,
so a distance-regular graph with such an intersection array does not exist.

Consider now a distance-regular graph with intersection array
$\{53, 40, 28, 16; 1, 4, 10,$ $28\}$.
Such a graph is formally self-dual for the natural ordering of eigenspaces
and therefore also $Q$-polynomial.
Augmenting the system of equations \eqref{eqn:triple}
with twelve equations derived from Theorem~\ref{thm:krein0}
gives a two parameter solution for triple intersection numbers
with respect to three vertices mutually at distances $1, 3, 3$.
However, it turns out that there is no integral solution,
leading to nonexistence of the graph.
\end{proof}

\begin{remark}
The non-existence of a distance-regular graph
with intersection array $\{53, 40,$ $28, 16;$ $1, 4, 10, 28\}$
also follows by applying the Terwilliger polynomial~\cite{GK}.
Recall that this polynomial, say $T_{\Gamma}(x)$, which depends only on the intersection numbers 
of a $Q$-polynomial distance-regular graph $\Gamma$ and its $Q$-polynomial ordering, satisfies:
\begin{equation}\label{eq:Tpoly}
T_{\Gamma}(\eta)\geq 0,
\end{equation}
where $\eta$ is any non-principal eigenvalue of the local graph of an arbitrary vertex $x$ of $\Gamma$.
Furthermore, by~\cite[Theorem~4.4.3(i)]{BCN}, $\eta$ satisfies
\begin{equation}\label{eq:443}
-1-\frac{b_1}{\theta_1+1}\leq \eta\leq -1-\frac{b_1}{\theta_D+1}, 
\end{equation}
where $b_0=\theta_0>\theta_1>\ldots>\theta_D$ are the $D+1$ distinct eigenvalues of $\Gamma$.

For the above-mentioned intersection array, $T_{\Gamma}(x)$ is a polynomial of degree $4$ with 
a negative leading term and the following roots: 
$-\frac{7}{3}$ $(=-1-\frac{b_1}{\theta_1+1})$,
$\frac{9-\sqrt{249}}{4}\approx -1.695$,
$\frac{17}{3}$ $(=-1-\frac{b_1}{\theta_D+1})$,
$\frac{9+\sqrt{249}}{9}\approx 6.195$.
 
Thus, combining \eqref{eq:Tpoly} and \eqref{eq:443}, we obtain
\[
-\frac{7}{3}\leq \eta\leq \frac{9-\sqrt{249}}{4}\text{~or~} \eta=\frac{17}{3},
\]
and one can finally obtain a contradiction as in~\cite[Claim~4.3]{GKbf}.
\end{remark}

\subsection{Infinite families}\label{ssec:families}

The data from Tables~\ref{tab:d3prim}, \ref{tab:d4qbip} and~\ref{tab:d5qbip}
allows us to look for infinite families of Krein arrays
for which we can show nonexistence
of corresponding $Q$-polynomial association schemes.
We find three families, one for each number of classes.

The first family of Krein arrays is given by
\begin{equation}\label{eqn:fam3}
\{2r^2-1, 2r^2-2, r^2+1; 1, 2, r^2-1\}.
\end{equation}
This Krein array is feasible for all integers $r \ge 2$.
A $Q$-polynomial association scheme with Krein array \eqref{eqn:fam3}
has $3$ classes and $4r^4$ vertices.
Examples exist when $r$ is a power of $2$
-- they are realized by duals of Kasami codes with minimum distance $5$,
see~\cite[\S 11.2]{BCN}.

\begin{theorem}\label{thm:fam3}
A $Q$-polynomial association scheme
with Krein array \eqref{eqn:fam3} and $r$ odd does not exist.
\end{theorem}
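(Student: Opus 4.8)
The plan is to carry out the triple intersection number analysis of this section symbolically in the parameter $r$, aiming to show that for odd $r$ the system of Diophantine equations attached to the triple of relations $(r,s,t) = (1,1,2)$ (in the natural ordering) has no nonnegative integral solution. First I would reconstruct the whole parameter set of a putative scheme with Krein array \eqref{eqn:fam3}: from the tridiagonal matrix $L_1^*$ of Krein parameters one obtains the second eigenmatrix $Q$, then $P = 4r^4\,Q^{-1}$ (such a scheme has $4r^4$ vertices), and finally the intersection matrices $L_i = P^{-1}\diag(P_{0i},P_{1i},P_{2i},P_{3i})\,P$, so that every $p_{ij}^k$ becomes an explicit rational function of $r$. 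In passing I would record all Krein parameters that vanish identically in $r$: besides $q_{11}^1 = a_1^* = (2r^2-1) - (2r^2-2) - 1 = 0$ and the parameters $q_{11}^3 = q_{13}^1 = 0$ forced by the tridiagonal shape of $L_1^*$, I expect $L_2^*$ and $L_3^*$ to supply additional identically-zero Krein parameters. Each relation $q_{rs}^t = 0$ then contributes, via Theorem~\ref{thm:krein0}, one linear equation on the triple intersection numbers.

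Next, I would fix vertices $x, y, z$ with $(x,y),(x,z) \in R_1$ and $(y,z) \in R_2$ --- such vertices exist for all $r \ge 2$ since the relevant intersection number $p^2_{11}$ is positive --- and assemble, for the triple $(1,1,2)$, the linear system made up of the equations \eqref{eqn:triple}, the boundary values $[0\ j\ k]$, $[i\ 0\ k]$, $[i\ j\ 0]$, and the Theorem~\ref{thm:krein0} equations coming from the vanishing Krein parameters. Solving this system over $\mathbb{Q}(r)$, I expect its solution set to collapse to an affine space of dimension one or two, with each $[i\ j\ k]$ an affine function of the free parameter(s) with coefficients in $\mathbb{Q}(r)$. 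The quantity to pin down is a single triple intersection number --- I anticipate one of $[1\ 1\ 2]$ or $[1\ 2\ 1]$ (or a short linear combination of the $[i\ j\ k]$ amenable to double counting in the spirit of Corollary~\ref{cor:dblcnt}) --- whose value turns out to be independent of the free parameter(s) and hence equal to a fixed rational function $f(r)$.

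The heart of the argument, and the step where the hypothesis that $r$ is odd is used, is to show that $f(r)$ fails to be a nonnegative integer exactly when $r$ is odd; I expect the obstruction to be that $f(r)$ has denominator $2$ in lowest terms for odd $r$, so that the corresponding triple intersection number cannot be an integer. This is consistent with the individual entries $\willif{324,17}$ ($r = 3$) and $\willif[a]{2500,49}$ ($r = 5$) in Table~\ref{tab:d3prim}, both of which are ruled out via the triple $(1,1,2)$. The main obstacle is the bookkeeping needed to isolate such an $f(r)$ cleanly: typically one must choose an advantageous $\mathbb{Q}(r)$-linear combination of the Theorem~\ref{thm:krein0} equations so that a single triple intersection number is determined, and then carry out an elementary modulo-$2$ (or small-prime) divisibility analysis of the resulting polynomial identity. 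As a consistency check I would verify that for $r$ a power of $2$ the same system does admit an integral nonnegative solution --- as it must, since the duals of Kasami codes realize \eqref{eqn:fam3} in that case --- confirming that it is genuinely the parity of $r$, and not the particular choice of triple, that obstructs existence.
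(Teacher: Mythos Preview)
Your plan is exactly the paper's approach: the authors use the triple $(R_1,R_1,R_2)$, note that $q_{11}^1=0$ together with the $Q$-polynomial zeros yields four extra equations from Theorem~\ref{thm:krein0}, and obtain (contrary to your expectation of a one- or two-dimensional solution) a four-parameter family in which no single $[i\ j\ k]$ is determined; instead they observe the integral linear relation
\[
[1\ 2\ 3]-[1\ 3\ 1]-3\,[2\ 3\ 3]+[3\ 1\ 1]-4\,[3\ 3\ 3]=-\tfrac{r^4}{2}+2r^2,
\]
whose right-hand side is non-integral precisely when $r$ is odd. So your hedge about ``a short linear combination'' is the actual mechanism, and you should not expect further vanishing Krein parameters beyond $q_{11}^1$ and the triangle-inequality zeros.
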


\begin{proof}
Consider a $Q$-polynomial association scheme
with Krein array \eqref{eqn:fam3}.
Besides the Krein parameters failing the triangle inequality,
$q^1_{11}$ is also zero.
Therefore, in order to compute triple intersection numbers,
the system of equations \eqref{eqn:triple}
can be augmented with four equations derived from Theorem~\ref{thm:krein0}.
We compute triple intersection numbers with respect to vertices $x, y, z$
such that $(x, y), (x, z) \in R_1$ and $(y, z) \in R_2$.
Since $p^2_{11} = r(r+2)(r^2-1)/4 > 0$, such vertices must exist.
We obtain a four parameter solution
(see the notebook
\href{https://nbviewer.jupyter.org/github/jaanos/sage-drg/blob/master/jupyter/QPoly-d3-1param-odd.ipynb}{\tt QPoly-d3-1param-odd.ipynb}
on the {\tt sage-drg} package repository for computation details).
Then we may express
$$
[1\ 2\ 3] = -{r^4 \over 2} + 2r^2 + [1\ 3\ 1] + 3 \cdot [2\ 3\ 3]
            - [3\ 1\ 1] + 4 \cdot [3\ 3\ 3] .
$$
Clearly, the above triple intersection number can only be integral
when $r$ is even.
Therefore, we conclude that a $Q$-polynomial association scheme
with Krein array \eqref{eqn:fam3} and $r$ odd does not exist.
\end{proof}

The next family is a two parameter family of Krein arrays
\begin{equation}\label{eqn:fam4}
\{m, m-1, m(r^2-1)/r^2, m-r^2+1; 1, m/r^2, r^2-1, m\}
\end{equation}
This Krein array is feasible for all integers $m$ and $r$
such that $0 < 2(r^2-1) \le m \le r(r-1)(r+2)$ and $m(r+1)$ is even.
A $Q$-polynomial association scheme with Krein array \eqref{eqn:fam4}
is $Q$-bipartite with $4$ classes and $2m^2$ vertices.
One may take the $Q$-bipartite quotient of such a scheme
(i.e., identify vertices in relation $R_4$)
to obtain a strongly regular graph with parameters
$\nklm = (m^2, (m-1)r^2, m + r^2(r^2-3), r^2(r^2-1))$,
i.e., a pseudo-Latin square graph.
Therefore, we say that a scheme with Krein array \eqref{eqn:fam4}
is of {\em Latin square type}.

There are several examples of $Q$-polynomial association schemes
with Krein array \eqref{eqn:fam4} for some $r$ and $m$.
For $(r, m) = (2, 6)$ and $(r, m) = (3, 16)$,
this Krein array is realized by the schemes of shortest vectors
of the $E_6$ lattice
and an overlattice of the Barnes-Wall lattice in $\R^{16}$~\cite{MT},
respectively.
For $(r, m) = (2^{ij}, 2^{i(2j+1)})$,
there are examples arising from duals of extended Kasami codes~%
\cite[\S 11.2]{BCN}
for each choice of positive integers $i$ and $j$.
In particular, the Krein array obtained by setting $i = j = 1$
uniquely determines the halved $8$-cube.

In the case when $r$ is a prime power and $m = r^3$,
the formal dual of this parameter set
(i.e., a distance-regular graph with the corresponding intersection array)
is realized by a Pasechnik graph~\cite{BP}.

\begin{theorem}\label{thm:fam4}
A $Q$-polynomial association scheme
with Krein array \eqref{eqn:fam4} and $m$ odd does not exist.
\end{theorem}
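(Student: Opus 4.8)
The plan is to follow the template of the proof of Theorem~\ref{thm:fam3}. Fix a $Q$-polynomial association scheme with Krein array \eqref{eqn:fam4}; it has $D = 4$ classes and is $Q$-bipartite, so besides the Krein parameters that vanish by the triangle inequality we also have $q^k_{ij} = 0$ whenever $i + j + k$ is odd (the formal dual of the parity condition $p^k_{ij} = 0$ for bipartite distance-regular graphs, which in particular gives $a_i^* = q^i_{1i} = 0$). This produces a sizeable list of vanishing Krein parameters, and for each one Theorem~\ref{thm:krein0} yields a linear equation in the triple intersection numbers, to be adjoined to the system \eqref{eqn:triple}.

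First I would compute the intersection numbers $p^k_{ij}$ in terms of $m$ and $r$ (from the matrices $L_j$ obtained by inverting the second eigenmatrix), and in particular verify that $p^2_{11} > 0$ for all feasible pairs $(m, r)$, so that there exist vertices $x, y, z$ with $(x, y), (x, z) \in R_1$ and $(y, z) \in R_2$; note that $m$ odd forces $r$ odd (hence $r \ge 3$) by the feasibility requirement that $m(r+1)$ be even. I would then set up and solve, symbolically over $\mathbb{Q}(m, r)$, the combined linear system for the triple intersection numbers with respect to such a triple $(x, y, z)$; as in the $3$-class case one expects a general solution depending on a handful of free integer parameters.

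The crux is to read off from this parametric solution a single triple intersection number $[i\ j\ k]$ whose expression has the shape $\tfrac{1}{2}(\text{an integer that is odd exactly when } m \text{ is odd}) + (\text{an integer linear combination of the free parameters})$; integrality of $[i\ j\ k]$ then forces $m$ to be even, which is the assertion. The main obstacle is precisely this last step together with the size of the computation: the $4$-class $Q$-bipartite system is larger than the one in Theorem~\ref{thm:fam3}, so carrying out the symbolic solution in the two parameters $m, r$ and isolating a coordinate that carries a \emph{clean} parity obstruction (rather than one entangled with the parity of $r$) is the delicate part. This is exactly the sort of computation the {\tt sage-drg} package is built to handle, and I would expect the explicit general solution to be recorded in an accompanying notebook analogous to {\tt QPoly-d3-1param-odd.ipynb}.
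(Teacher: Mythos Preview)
Your proposal is correct and follows essentially the same approach as the paper: choose a triple $x,y,z$ with $(x,y),(x,z)\in R_1$, $(y,z)\in R_2$ (the paper records $p^2_{11}=r^2(r^2-1)/2>0$), adjoin the $Q$-bipartite Krein vanishings via Theorem~\ref{thm:krein0}, and read off a parity obstruction. The paper in fact obtains a one-parameter solution and the explicit relation $[1\ 1\ 3] = r + \tfrac{r^2(1-r)}{2} - \tfrac{m}{2} + [1\ 1\ 1]$, whose integrality forces $m$ even (your worry about entanglement with the parity of $r$ does not materialise, since $r + \tfrac{r^2(1-r)}{2}\in\mathbb{Z}$ for all integers $r$).
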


\begin{proof}
Consider a $Q$-polynomial association scheme
with Krein array \eqref{eqn:fam4}.
Since the scheme is $Q$-bipartite,
we have $q^k_{ij} = 0$ whenever $i+j+k$ is odd
or the triple $(i, j, k)$ does not satisfy the triangle inequality.
This allows us to augment the system of equations \eqref{eqn:triple}
with many equations derived from Theorem~\ref{thm:krein0}.
We compute triple intersection numbers with respect to vertices $x, y, z$
such that $(x, y), (x, z) \in R_1$ and $(y, z) \in R_2$.
Since $p^2_{11} = r^2(r^2-1)/2 > 0$, such vertices must exist.
We obtain a one parameter solution
(see the notebook
\href{https://nbviewer.jupyter.org/github/jaanos/sage-drg/blob/master/jupyter/QPoly-d4-LS-odd.ipynb}{\tt QPoly-d4-LS-odd.ipynb}
on the {\tt sage-drg} package repository for computation details)
which allows us to express
$$
[1\ 1\ 3] = r + {r^2(1-r) \over 2} - {m \over 2} + [1\ 1\ 1] .
$$
Clearly, the above triple intersection number can only be integral
when $m$ is even.
Therefore, we conclude that a $Q$-polynomial association scheme
with Krein array \eqref{eqn:fam4} and $m$ odd does not exist.
\end{proof}

The last family is given by the Krein array
\begin{equation}\label{eqn:fam5}
\begin{multlined}
\left\{{r^2+1 \over 2}, {r^2-1 \over 2}, {(r^2+1)^2 \over 2r(r+1)},
{(r-1)(r^2+1) \over 4r}, {r^2+1 \over 2r}; \right. \\
\left. 1, {(r-1)(r^2+1) \over 2r(r+1)}, {(r+1)(r^2 + 1) \over 4r},
{(r-1)(r^2+1) \over 2r}, {r^2+1 \over 2}\right\}
\end{multlined}
\end{equation}
This Krein array is feasible for all odd $r \ge 5$.
A $Q$-polynomial association scheme with Krein array \eqref{eqn:fam5}
is $Q$-bipartite with $5$ classes and $2(r+1)(r^2+1)$ vertices.
One may take the $Q$-bipartite quotient of such a scheme
to obtain a strongly regular graph with parameters
$\nklm = ((r+1)(r^2+1), r(r+1), r-1, r+1)$
-- these are precisely the parameters of collinearity graphs
of generalized quadrangles $\GQ(r, r)$.
The scheme also has a second $Q$-polynomial ordering of eigenspaces,
namely the ordering $E_5, E_2, E_3, E_4, E_1$
relative to the ordering implied by the Krein array.
For $r \equiv 1 \pmod{4}$ a prime power,
the Krein array \eqref{eqn:fam5} is realized by a scheme
derived by Moorhouse and Williford~\cite{MW}
from a double cover of the $C_2(r)$ dual polar graph.

\begin{theorem}\label{thm:fam3}
A $Q$-polynomial association scheme
with Krein array \eqref{eqn:fam5} and $r \equiv 3 \pmod{4}$
does not exist.
\end{theorem}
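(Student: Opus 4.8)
The plan is to mimic the proofs for families \eqref{eqn:fam3} and \eqref{eqn:fam4}: produce a parametric description of the triple intersection numbers for one well-chosen triple of vertices, and then exhibit a single triple intersection number whose value, written as an affine form in the free parameters with \emph{integer} coefficients, has a constant term that is not an integer when $r \equiv 3 \pmod{4}$. Since every triple intersection number is a nonnegative integer, this contradiction rules out the scheme.

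First I would determine which Krein parameters of a putative scheme with Krein array \eqref{eqn:fam5} vanish. Being $Q$-bipartite, the scheme has $q^k_{ij} = 0$ whenever $i+j+k$ is odd, and also whenever $(i,j,k)$ violates the triangle inequality for the $Q$-polynomial ordering coming from \eqref{eqn:fam5}. The scheme moreover admits the second $Q$-polynomial ordering $E_5, E_2, E_3, E_4, E_1$, and the triangle inequality for \emph{that} ordering forces still more Krein parameters to be zero. Each vanishing $q^t_{rs}$ contributes one equation via Theorem~\ref{thm:krein0}, which I would adjoin to the linear system \eqref{eqn:triple}.

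Next I would take the triple $(x,y,z)$ with $(x,y), (x,z), (y,z) \in R_1$ --- the configuration for which Table~\ref{tab:d5qbip} records the obstruction in the finitely many listed cases. A short computation with the parameters of \eqref{eqn:fam5} shows $p^1_{11} > 0$ for all odd $r \ge 5$, so such a triple exists. Solving the augmented system over $\mathbb{Q}(r)$, I expect a solution depending on a small number of free parameters, as in the $D = 3, 4$ cases. From it I would extract an identity of the shape $[i\ j\ k] = c(r) + (\text{an integer combination of the free parameters})$, where $c(r)$ is a rational function of $r$ whose value is a half-integer (or a quarter-integer, etc.) precisely when $r \equiv 3 \pmod{4}$; the relevant arithmetic is that $r^2 + 1 \equiv 2 \pmod{8}$ for odd $r$ while $r - 1$ is divisible by $4$ only when $r \equiv 1 \pmod{4}$, so a surviving factor of $4$ (or $8$) in the denominator produces the non-integrality. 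This contradicts $[i\ j\ k] \in \mathbb{Z}$ and finishes the proof.

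The main obstacle is the symbolic computation itself: with five classes a triple carries $5^3 = 125$ unknowns, and although $Q$-bipartiteness together with the two $Q$-polynomial orderings prunes this heavily, keeping $r$ as a formal parameter means the elimination produces rational functions of $r$, so one must check that no spurious common factor involving $r$ gets cancelled --- in particular that the triple $(x,y,z)$ is ``generic'' uniformly for all odd $r \ge 5$ --- and then locate, among the many triple intersection numbers, one whose expression displays the mod-$4$ obstruction transparently. As with the previous families this is best carried out with the {\tt sage-drg} package, and I would expect the argument to be documented by a Jupyter notebook in that package's repository recording the parametric solution and the resulting identity.
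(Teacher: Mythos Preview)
Your proposal is correct and follows essentially the same route as the paper: choose a triple of vertices mutually in $R_1$ (which exists since $p^1_{11}=(r-1)/2>0$), augment \eqref{eqn:triple} with the equations from Theorem~\ref{thm:krein0} coming from the many vanishing Krein parameters of a $Q$-bipartite scheme, and read off a non-integrality obstruction modulo~$4$. In fact the computation is even cleaner than you anticipate --- the system has a \emph{unique} solution (no free parameters), and the obstructing identity is simply $[1\ 1\ 1]=(r-5)/4$.
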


\begin{proof}
Consider a $Q$-polynomial association scheme
with Krein array \eqref{eqn:fam5}.
Since the scheme is $Q$-bipartite,
we have $q^k_{ij} = 0$ whenever $i+j+k$ is odd
or the triple $(i, j, k)$ does not satisfy the triangle inequality.
This allows us to augment the system of equations \eqref{eqn:triple}
with many equations derived from Theorem~\ref{thm:krein0}.
We compute triple intersection numbers with respect to vertices $x, y, z$
that are mutually in relation $R_1$.
Since $p^1_{11} = (r-1)/2 > 0$, such vertices must exist.
We obtain a single solution
(see the notebook
\href{https://nbviewer.jupyter.org/github/jaanos/sage-drg/blob/master/jupyter/QPoly-d5-1param-3mod4.ipynb}{\tt QPoly-d5-1param-3mod4.ipynb}
on the {\tt sage-drg} package repository for computation details)
with
$$
[1\ 1\ 1] = {r-5 \over 4} .
$$
Clearly, the above triple intersection number can only be integral
when $r \equiv 1 \pmod{4}$.
Therefore, we conclude that a $Q$-polynomial association scheme
with Krein array \eqref{eqn:fam5} and $r \equiv 3 \pmod{4}$ does not exist.
\end{proof}

\section{Quadruple intersection numbers}\label{sec:quadruple}

The argument of the proof of Theorem~\ref{thm:krein0}
(\cite[Theorem~2.3.2]{BCN})
can be further extended to $s$-tuples of vertices
(see Remark (iii) in~\cite[\S2.3]{BCN}).
In particular, we may consider {\em quadruple intersection numbers} with respect
to a quadruple of vertices $w, x, y, z\in X$.
For integers $h, i, j, k$ ($0 \le h, i, j, k \le D$),
denote by $\sW{w & x & y & z}{h & i & j & k}$
(or simply $[h\ i\ j\ k]$ when it is clear
which quadruple $(w, x, y, z)$ we have in mind)
the number of vertices $u \in X$ such that
$(u, w) \in R_h$, $(u, x) \in R_i$, $(u, y) \in R_j$, and $(u, z) \in R_k$.

For a fixed quadruple $(w, x, y, z)$,
one can obtain a system of linear Diophantine equations
with quadruple intersection numbers as variables
which relates them to the intersection numbers
(or to the triple intersection numbers).

The following analogue of Theorem~\ref{thm:krein0} allows us
to obtain some additional equations.

\begin{theorem}\label{thm:doublekrein0}
Let $(X, \{R_i\}_{i=0}^D)$ be an association scheme of $D$ classes
with second eigenmatrix $Q$
and Krein parameters $q_{ij}^k$ $(0 \le i,j,k \le D)$.
Then, for fixed $p, r, s, t$ $(0 \le p, r, s, t \le D)$,
\[
\sum_{\ell=0}^Dq^\ell_{pr}q^\ell_{st}=0 \quad \Longleftrightarrow \
\sum_{h,i,j,k=0}^D Q_{hp} Q_{ir} Q_{js} Q_{kt}
\sW{w & x & y & z}{h & i & j & k} = 0
\quad \text{for all $w, x, y, z \in X$.}
\]
\end{theorem}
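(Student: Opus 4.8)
The plan is to rerun the argument behind Theorem~\ref{thm:krein0} (equivalently, \cite[Theorem~2.3.2]{BCN}) with one additional tensor factor. Write $G(w,x,y,z)$ for the left-hand side of the asserted identity. Since the relations are symmetric, the quadruple intersection number factors as $\sW{w & x & y & z}{h & i & j & k} = \sum_{u\in X}(A_h)_{wu}(A_i)_{xu}(A_j)_{yu}(A_k)_{zu}$; substituting this into $G$ and using $\sum_h Q_{hp}(A_h)_{wu} = |X|\,(E_p)_{wu}$, which is just \eqref{eqn:PQ} read entrywise, I would obtain
\[
G(w,x,y,z) = |X|^4 \sum_{u\in X}(E_p)_{wu}(E_r)_{xu}(E_s)_{yu}(E_t)_{zu}.
\]

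Next I would interpret this sum inside $V := (\R^X)^{\otimes 4}$. Let $\Pi := E_p \otimes E_r \otimes E_s \otimes E_t$, and let $\delta := \sum_{u\in X} e_u \otimes e_u \otimes e_u \otimes e_u$ be the diagonal vector. Since each $E_i$ is a symmetric idempotent, $\Pi$ is an orthogonal projection on $V$, and, comparing with the displayed expression for $G$, an entrywise computation gives $\langle e_w \otimes e_x \otimes e_y \otimes e_z, \Pi\delta\rangle = |X|^{-4}\,G(w,x,y,z)$. As $(w,x,y,z)$ ranges over $X^4$, the vectors $e_w \otimes e_x \otimes e_y \otimes e_z$ run over an orthonormal basis of $V$, so $G$ vanishes identically if and only if $\Pi\delta = 0$; and since $\Pi$ is a projection, $\langle\delta,\Pi\delta\rangle = \|\Pi\delta\|^2$, so this holds if and only if $\langle\delta,\Pi\delta\rangle = 0$.

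It then remains to compute $\langle\delta,\Pi\delta\rangle = \sum_{u,v\in X}(E_p)_{uv}(E_r)_{uv}(E_s)_{uv}(E_t)_{uv}$. Grouping the factors as $(E_p \circ E_r)_{uv}(E_s \circ E_t)_{uv}$ and using symmetry of $E_s \circ E_t$, this equals $\Tr\bigl((E_p \circ E_r)(E_s \circ E_t)\bigr)$; expanding both Schur products via \eqref{eqn:Kreinparameters} and using $E_\ell E_m = 0$ for $\ell \ne m$ together with $\Tr(E_\ell) = m_\ell$ gives
\[
\langle\delta,\Pi\delta\rangle = \frac{1}{|X|^2}\sum_{\ell=0}^{D} q^\ell_{pr}\,q^\ell_{st}\,m_\ell .
\]
Since the Krein parameters are nonnegative (\cite[Lemma~2.4]{D}) and each $m_\ell$ is positive, every summand is nonnegative, so $\langle\delta,\Pi\delta\rangle = 0$ exactly when $q^\ell_{pr}q^\ell_{st} = 0$ for all $\ell$, which (again by nonnegativity) is equivalent to $\sum_{\ell} q^\ell_{pr}q^\ell_{st} = 0$. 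Chaining these equivalences yields the theorem.

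I do not anticipate a genuine obstacle: this is a ``one more factor'' version of Theorem~\ref{thm:krein0}, and the work is essentially bookkeeping. The one point worth emphasizing is structural: with four idempotents the relevant quantity is the Frobenius pairing of $E_p \circ E_r$ against $E_s \circ E_t$, rather than the trace of a single Schur product against an idempotent, which is why a bilinear expression $\sum_\ell q^\ell_{pr}q^\ell_{st}$ appears in place of a single Krein parameter — and it is precisely the nonnegativity of the $q^k_{ij}$ that lets one pass between ``$\sum_\ell q^\ell_{pr}q^\ell_{st} = 0$'' and ``each product $q^\ell_{pr}q^\ell_{st}$ vanishes''.
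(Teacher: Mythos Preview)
Your proof is correct and follows essentially the same route as the paper: both identify the sum $G(w,x,y,z)$ with $|X|^4\sigma(w,x,y,z)$ where $\sigma(w,x,y,z)=\sum_{u}(E_p)_{wu}(E_r)_{xu}(E_s)_{yu}(E_t)_{zu}$, observe that $\sigma$ vanishes identically iff $\sum_{w,x,y,z}\sigma(w,x,y,z)^2=\Sigma(E_p\circ E_r\circ E_s\circ E_t)=0$, and evaluate this as $\frac{1}{|X|^2}\sum_\ell m_\ell\,q^\ell_{pr}q^\ell_{st}$ via $\Tr((E_p\circ E_r)(E_s\circ E_t))$. Your tensor-product packaging (with $\Pi=E_p\otimes E_r\otimes E_s\otimes E_t$ and $\delta$ the diagonal vector, so that $\Pi\delta$ has entries $\sigma(w,x,y,z)$ and $\|\Pi\delta\|^2=\langle\delta,\Pi\delta\rangle$) is a clean rephrasing of the paper's explicit sum-of-squares computation via the idempotent identity $\sum_w E_i(u,w)E_i(v,w)=E_i(u,v)$, but the argument is the same.
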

\begin{proof}
Since $E_i$ is a symmetric idempotent matrix, one has
\begin{equation}\label{eqn:idemp}
\sum_{w \in X} E_i(u, w) E_i(v, w) = E_i(u, v).
\end{equation}

Let $\Sigma(M)$ denote the sum of all entries of a matrix $M$.
Then, by \eqref{eqn:idemp},
\begin{alignat}{2}
\nonumber
\Sigma(E_p \circ E_r \circ E_s \circ E_t) &=&
\sum_{u, v \in X} \ \ &{} E_p(u, v) E_r(u, v) E_s(u, v) E_t(u, v) \\
\nonumber &=& \sum_{w, x, y, z \in X}
  &{} \left(\sum_{u\in X} E_p(u, w) E_r(u, x) E_s(u, y) E_t(u, z)\right)
  \cdot \\
\nonumber
&&&{} \left(\sum_{v\in X} E_p(v, w) E_r(v, x) E_s(v, y) E_t(v, z)\right) \\
\label{eqn:quad1}
&=&{} \sum_{w, x, y, z \in X} &{} \sigma(w, x, y, z)^2\geq 0,
\end{alignat}
where
$\sigma(w, x, y, z) = \sum_{u\in X} E_p(u, w) E_r(u, x) E_s(u, y) E_t(u, z)$.

On the other hand, by \eqref{eqn:Kreinparameters},
\begin{align}
\nonumber |X|^2 \, \Sigma(E_p \circ E_r \circ E_s \circ E_t)
&= |X|^2 \, \Tr( (E_p \circ E_r) \cdot (E_s \circ E_t) ) \\
\nonumber
&= \Tr\left(\left(\sum_{\ell=0}^D q^\ell_{pr} E_\ell \right)\cdot
            \left(\sum_{\ell=0}^D q^\ell_{st} E_\ell\right)\right) \\
\label{eqn:quad2}
&= \sum_{\ell=0}^D m_\ell q^\ell_{pr} q^\ell_{st},
\end{align}
where $m_\ell$ is the rank of $E_\ell$
(i.e., the multiplicity of the corresponding eigenspace),
and by \eqref{eqn:PQ},
\begin{align}
\nonumber |X|^3 \, \Sigma(E_p \circ E_r \circ E_s \circ E_t)
&= \frac{1}{|X|} \sum_{\ell=0}^D
    Q_{\ell p} Q_{\ell r} Q_{\ell s} Q_{\ell t} \Sigma(A_\ell) \\
\label{eqn:quad3}
&= \sum_{\ell=0}^D n_\ell Q_{\ell p} Q_{\ell r} Q_{\ell s} Q_{\ell t},
\end{align}
where $n_\ell$ is the valency of $(X, R_\ell)$.

Since the multiplicities $m_\ell$ are positive numbers
and the Krein parameters are non-negative numbers,
by \eqref{eqn:quad1}, \eqref{eqn:quad2}, \eqref{eqn:quad3},
we have $\Sigma(E_p \circ E_r \circ E_s \circ E_t) = 0$
if and only if $q^\ell_{pr} q^\ell_{st} = 0$
(with fixed $p, r, s, t$) for all $\ell = 0, \ldots, D$.
In this case, we have $\sigma(w, x, y, z)=0$
for all quadruples $(w, x, y, z)$,
which implies
\begin{align*}
0 = |X|^4 \, \sigma(w, x, y, z)
&= |X|^4 \, \sum_{u \in X} E_p(u, w) E_r(u, x) E_s(u, y) E_t(u, z) \\
&= \sum_{h,i,j,k=0}^D Q_{hp} Q_{ir} Q_{js} Q_{kt}
    \sW{w & x & y & z}{h & i & j & k} ,
\end{align*}
which completes the proof.
\end{proof}

The condition of Theorem \ref{thm:doublekrein0} is satisfied when,
for example, an association scheme is $Q$-bipartite,
i.e., $q_{ij}^k=0$ whenever $i+j+k$ is odd
(take $p+r$ and $s+t$ of different parity).

Suda~\cite{S} lists several families of association schemes
which are known to be {\em triply regular},
i.e., their triple intersection numbers $\sV{x & y & z}{i & j & k}$
only depend on $i, j, k$ and the mutual distances between $x, y, z$,
and not on the choices of the vertices themselves:
\begin{itemize}
  \item strongly regular graphs with $q_{11}^1=0$ (cf.~\cite{CGS}),
  \item Taylor graphs (antipodal $Q$-bipartite schemes of $3$ classes),
  \item linked systems of symmetric designs
    (certain $Q$-antipodal schemes of $3$ classes) with $a_1^*=0$,
  \item tight spherical $7$-designs
    (certain $Q$-bipartite schemes of $4$ classes), and
  \item collections of real mutually unbiased bases
    ($Q$-antipodal $Q$-bipartite schemes of $4$ classes).
\end{itemize}
Schemes belonging to the above families seem natural candidates
for the computations of their quadruple intersection numbers.
However, the condition of Theorem~\ref{thm:doublekrein0}
is never satisfied for primitive strongly regular graphs,
while for Taylor graphs the obtained equations do not give any information
that could not be obtained
through relating the quadruple intersection numbers
to the triple intersection numbers.
This was also the case for the examples
of triply regular linked systems of symmetric designs that we have checked.
However,
in the cases of tight spherical $7$-designs and mutually unbiased bases,
we do get new restrictions on quadruple intersection numbers.
So far,
we have not succeeded in using this new information
for either new constructions or proofs of nonexistence.

\Acknowledgements

\bibliographystyle{abbrv}
\bibliography{references}

\end{document}